\newcommand{\R}{\ensuremath{\mathbb R}}
\newcommand{\F}{\ensuremath{\mathcal F}}
\newcommand{\Sc}{\ensuremath{S^c}}
\newcommand{\prob}[1]{\ensuremath{{\mathbb P}\left(#1\right)}}
\newcommand{\expct}[1]{\ensuremath{{\mathbb E}#1}}
\newcommand{\size}[1]{\ensuremath{\left|#1\right|}}
\newcommand{\argmin}{\operatorname{argmin}}
\newcommand{\e}{\epsilon}
\newcommand{\ve}{\varepsilon}
\newcommand{\silent}[1]{}
\newcommand{\ip}[1]{\;\langle{\,#1\,}\rangle\;}
\newcommand{\Ball}{{B}}
\newcommand{\T}{{\mathcal T}}
\newcommand{\RE}{{\mathcal R}}
\newcommand{\X}{{\mathcal X}}
\newcommand{\Net}{{\mathcal N}}
\newcommand{\var}[1]{\ensuremath{{\textsf{Var}} \left(#1\right)}}
\newcommand{\basepen}{\ensuremath{\lambda_{\sigma,a,p}}}
\newcommand{\inv}[1]{\frac{1}{#1}}
\newcommand{\abs}[1]{\left\lvert#1\right\rvert}
\newcommand{\twonorm}[1]{\left\lVert#1\right\rVert_2}
\newcommand{\fnorm}[1]{\left\lVert#1\right\rVert_F}
\newcommand{\norm}[1]{\left\lVert#1\right\rVert}
\newcommand{\bens}{\begin{eqnarray*}}
\newcommand{\eens}{\end{eqnarray*}}
\newcommand{\ben}{\begin{eqnarray}}
\newcommand{\een}{\end{eqnarray}}
\newcommand{\beq}{\begin{equation}}
\newcommand{\eeq}{\end{equation}}
\def\supp{\mathop{\text{\rm supp}\kern.2ex}}
\def\conv{\mathop{\text{\rm conv}\kern.2ex}}
\def\argmin{\mathop{\text{arg\,min}\kern.2ex}}
\let\hat\widehat
\let\tilde\widetilde
\numberwithin{equation}{section}
\theoremstyle{plain}
\newtheorem{theorem}{Theorem}[section]
\newtheorem{assumption}{Assumption}[section]
\newtheorem{lemma}[theorem]{Lemma}
\newtheorem{proposition}[theorem]{Proposition}
\newtheorem{definition}[theorem]{Definition}
\newtheorem{remark}[theorem]{Remark}
\newenvironment{proofof}[1]{\hspace*{20pt}{\it Proof}{ of #1}.\hskip10pt}{\qed\vskip5pt}
\newenvironment{proofof2}{}{\qed\vskip5pt}
\begin{document}
\title{Restricted Eigenvalue Conditions on Subgaussian Random Matrices}

\author{Shuheng Zhou \\
  \vspace{-.3cm}\\
Seminar f\"{u}r Statistik, Department of Mathematics, ETH Z\"{u}rich, CH-8092, Switzerland
}

\date{December 20, 2009} 

\maketitle

\begin{abstract}
\noindent
It is natural to ask: what kinds of matrices satisfy the 
Restricted Eigenvalue (RE) condition? 
In this paper, we associate the RE condition (Bickel-Ritov-Tsybakov
09) with the {\em complexity} of a subset of the sphere in $\R^p$, 
where $p$ is the dimensionality of the data, and show that a class of random 
matrices with independent rows, but not necessarily independent columns, 
satisfy the RE condition, when the sample size is above a certain lower 
bound. Here we explicitly introduce an additional 
covariance structure to the class of random matrices that we have known by 
now that satisfy the Restricted Isometry Property as defined in
Cand\`es and Tao 05 (and hence the RE condition), in order to 
compose a broader class of random matrices for which the RE condition holds.
In this case, tools from geometric functional analysis in 
characterizing the intrinsic {\em low-dimensional} structures associated 
with the RE condition has been crucial in analyzing the sample complexity 
and understanding its statistical implications for high dimensional data.
\end{abstract}

{\bf Keywords.}
High dimensional data,
Statistical estimation,
$\ell_1$ minimization,
Sparsity,
Lasso,
Dantzig selector,
Restricted Isometry Property,
Restricted Eigenvalue conditions,
Subgaussian random matrices

\section{Introduction}

In a typical high dimensional setting, the number of variables $p$ is much 
larger than the number of observations $n$. This challenging setting appears
in linear regression, signal recovery, covariance selection in graphical 
modeling, and sparse approximations. In this paper, 
we consider recovering $\beta \in \R^p$ in the following linear model:
\beq
\label{eq::linear-model}
Y = X \beta + \epsilon,
\eeq
where $X$ is an $n \times p$ design matrix, $Y$ is a vector
of noisy observations and $\epsilon$ being the noise term. The design
matrix is treated as either fixed or random. We assume throughout this paper 
that $p \ge n$ (i.e. high-dimensional) and $\epsilon \sim N(0, \sigma^2 I_n)$. 
Throughout this paper, we assume that the columns of $X$ have $\ell_2$ 
norms in the order of $\sqrt{n}$, which holds with an overwhelming 
probability when $X$ is a random design that we shall consider.

The restricted eigenvalue (RE) conditions as 
formalized by~\cite{BRT08}~\footnote{We note the authors have defined two 
such conditions, for which we show are equivalent except on the constant
defined within each definition; see Proposition~\ref{prop:two-RE-same}
and Proposition~\ref{prop:two-RE-same-final} in 
Section~\ref{sec:two-RE} for details.} 
are among the weakest and hence the most general conditions in literature 
imposed on the Gram matrix in order to guarantee nice statistical 
properties for the Lasso and the Dantzig selector; for example, 
under this condition, they derived bounds on $\ell_2$ prediction loss and on 
$\ell_p$, where $1 \leq p \leq 2$, loss for estimating the parameters for
both the Lasso and the Dantzig selector in both linear regression and 
nonparametric regression models. From now on, we refer to their 
conditions in general as the RE condition.
Before we elaborate upon the RE condition, we need some notation and 
some more definitions to put this condition in perspective.

Consider the linear regression model in~\eqref{eq::linear-model}.
For a chosen penalization parameter $\lambda_n \geq 0$, regularized 
estimation with the $\ell_1$-norm penalty, also known as the 
Lasso \citep{Tib96} or the Basis Pursuit~\citep{Chen:Dono:Saun:1998} 
refers to the following convex optimization problem  
\begin{eqnarray}
\label{eq::origin} \; \; 
\hat \beta = \arg\min_{\beta} \frac{1}{2n}\|Y-X\beta\|_2^2 + 
\lambda_n \|\beta\|_1,
\end{eqnarray}
where the scaling factor $1/(2n)$ is chosen by convenience. 

The Dantzig selector~\citep{CT07}, for a given $\lambda_n \geq 0$, 
is defined as
\begin{eqnarray}
\label{eq::DS-func}
(DS) \; \; \arg\min_{\hat{\beta} \in \R^p} \norm{\hat \beta}_1
 \;\; \text{subject to} \;\; 
\norm{\inv{n} X^T (Y - X \hat{\beta})}_{\infty} \leq \lambda_n.
\end{eqnarray}
For an integer $1 \leq s \leq p/2$, we refer to a vector 
$\beta \in \R^{p}$ with at most $s$ non-zero entries as an $s$-sparse vector. 
Let $\beta_T \in \R^{|T|}$, be a subvector of $\beta \in \R^p$ confined to $T$.
One of the common properties of the Lasso and the Dantzig 
selector is: for an appropriately chosen $\lambda_n$, 
for a vector $\upsilon := \hat{\beta} - \beta$,  where 
$\beta$ is an $s$-sparse vector and $\hat{\beta}$ is the solution 
from either the Lasso or the Dantzig selector, 
it holds with high probability (cf. Section~\ref{sec:recovery-proof})
\ben
\label{eq::cone}
\norm{\upsilon_{I^c}}_1 \leq k_0 \norm{\upsilon_{I}}_1,
\een
where $I \subset \{1, \ldots, p\}$, $|I| \leq s$ is the support of 
$\beta$, $k_0=1$ for the Dantzig selector, and for the Lasso 
it holds for $k_0 = 3$; see~\cite{BRT08} and~\cite{CT07} in case 
columns of $X$ have $\ell_2$ norm $\sqrt{n}$.
We use $\upsilon_{T_0}$ to always represent the subvector of 
$\upsilon \in \R^p$ confined to $T_0$, which corresponds to the locations
of the $s$ largest coefficients of $\upsilon$ in absolute values:
then~\eqref{eq::cone} implies that (see Proposition~\ref{prop:magic-cone})
\ben
\label{eq::extreme}
\norm{\upsilon_{T_0^c}}_1 \leq k_0 \norm{\upsilon_{T_0}}_1.
\een
We are now ready to introduce the Restricted Eigenvalue assumption
that is formalized in~\cite{BRT08}. In Section~\ref{sec:estimators},
we show the convergence rate on $\ell_p$ for $p =1, 2$ for both the Lasso
and the Dantzig selector under this condition for the purpose of 
completeness.
\begin{assumption}
\textnormal{(\bf{Restricted Eigenvalue assumption $RE(s, k_0, X)$}
~\citep{BRT08})} 
\label{def:BRT-cond}
For some integer $1\leq s \leq p$ and a positive number $k_0$, 
the following holds:
\beq
\label{eq::admissible}
\inv{K(s, k_0, X)} \stackrel{\triangle}{=}
\min_{\stackrel{J_0 \subseteq \{1, \ldots, p\},}{|J_0| \leq s}}
\min_{\stackrel{\upsilon \not=0,}{\norm{\upsilon_{J_0^c}}_1 \leq k_0 
\norm{\upsilon_{J_0}}_1}}
\; \;  \frac{\norm{X \upsilon}_2}{\sqrt{n}\norm{\upsilon_{J_0}}_2} > 0.
\eeq
\end{assumption}

\begin{definition}
\label{def:admit}
Throughout this paper, we say that a vector $\upsilon \in \R^p$ is 
admissible to~\eqref{eq::admissible} , 
or equivalently to~\eqref{eq::admissible-random},
for a given $k_0 > 0$ as defined therein, if 
$\upsilon \not= 0$ and for some $J_0 \in \{1, \ldots, p \}$ 
such that $|J_0| \leq s$, it holds that
$\norm{\upsilon_{J_0^c}}_1 \leq k_0 \norm{\upsilon_{J_0}}_1$.
Now it is clear that if $\upsilon$ is admissible to~\eqref{eq::admissible} ,
or equivalently to~\eqref{eq::admissible-random},
~\eqref{eq::extreme} holds (cf. Proposition~\ref{prop:magic-cone}).
\end{definition}

If $RE(s, k_0, X)$ is  satisfied with $k_0 \geq 1$,
then the square submatrices of size $\leq 2s$ of $X^T X/n$ are 
necessarily positive definite (see~\cite{BRT08}).
We note the ``universality'' 
of this condition as it is not tailored to any particular set $J_0$.
We also note that given such a universality condition,
it is sufficient to check if for all $\upsilon \not = 0$ that is admissible 
to~\eqref{eq::admissible} and for $K(s, k_0, X) >0$, the following 
inequality
\ben
\label{eq::admissible-interpret}
\frac{\norm{X \upsilon}_2}{\sqrt{n}} & \geq & \frac{\norm{\upsilon_{T_0}}_2}{K(s, k_0, X)} \; > \; 0
\een
holds, where $T_0$ corresponds to locations of the $s$ largest coefficients of 
$\upsilon$ in absolute values, 
as~\eqref{eq::admissible-interpret}
is both necessary and also sufficient to guarantee that~\eqref{eq::admissible}
holds;
See Proposition~\ref{prop:magic-cone} for details.

A special class of design matrices that satisfy the RE condition
are the random design matrices. 
This is shown in a large body of work in the high dimensional setting,
for example~\citep{CRT06,CT05,CT07,BDDW08,MPT08,ALPT09},
which shows that a uniform uncertainty principle (UUP, a condition that is 
stronger than the RE condition, see~\cite{BRT08}) 
holds for ``generic'' or random design matrices for very significant 
values of $s$; roughly speaking, UUP holds when the $2s$-restricted isometry 
constant $\theta_{2s}$ is small, which we now define.
Let $X_T$, where $T \subset \{1, \ldots, p\}$ be the $n \times |T|$ 
submatrix obtained by extracting columns of $X$ indexed by $T$.
\begin{definition}\textnormal{~\citep{CT05}} 
For each integer $s  =1, 2, \ldots$, 
the $s$-restricted isometry constant $\theta_s$ of 
$X$ is the smallest quantity such that
\ben
\label{eq::RIP}
(1 - \theta_s) \twonorm{c}^2 \leq \twonorm{X_T c}^2/n 
\leq (1 + \theta_s) \twonorm{c}^2,
\een
for all $T \subset \{1,\ldots, p\}$ with $|T| \leq s$ and coefficients 
sequences $(c_j)_{j \in T}$.
\end{definition}

It is well known that for a random matrix 
the UUP holds for $s = O(n/\log(p/n))$ with i.i.d. Gaussian random 
variables (that is, Gaussian random ensemble, subject to normalizations 
of columns), the Bernoulli, and in general the subgaussian 
ensembles~\citep{BDDW08,MPT08} (cf. Theorem~\ref{thm:subgaussian}).
Recently, it is shown~\citep{ALPT09} that UUP holds for 
$s = O(n/\log^2(p/n))$ when $X$ is a random matrix composed of 
columns that are independent isotropic vectors with log-concave densities.
Hence this setup only requires $\Theta(\log(p/n))$ 
or $\Theta(\log^2(p/n))$ observations per
nonzero value in $\beta$, where $\Theta$ hides a very small constant,
when $n$ is a nonnegligible fraction of $p$, in order to perform accurate 
statistical estimation; we call this level of sparsity as the linear sparsity.

The main purpose of this paper is to extend the family of random matrices 
from the i.i.d. subgaussian ensemble $\Psi$ (cf.~\eqref{eq::gamma-func}), 
which are now well known to satisfy the UUP condition and hence the RE 
condition under linear sparsity, to a larger family of random matrices 
$X := \Psi \Sigma^{1/2}$,
where $\Sigma$ is assumed to behave {\em sufficiently nicely} in the sense 
that it satisfies certain restricted eigenvalue conditions to be defined in 
Section~\ref{sec:random-RE}. Thus we have explicitly introduced the 
additional covariance structure $\Sigma$ to the columns of $\Psi$
in generating $X$.
In Theorem~\ref{thm:subgaussian-T}, we show that $X$ satisfies the RE 
condition with overwhelming probability once we have 
$n \geq C s \log (c p/s)$,  where $c$ is an absolute constant and $C$ 
depends on the restricted 
eigenvalues of $\Sigma$ (cf.~\eqref{eq::C-define}), when $\Sigma$ satisfies 
the restricted eigenvalue assumption to be specified in 
Section~\ref{sec:random-RE}. We believe such results can be extended 
to other cases: for example, when $X$ is the composition of a random 
Fourier ensemble, or randomly sampled rows of orthonormal matrices, see for 
example~\cite{CT06,CT07}.

Finally, we show rate of convergence results for the Lasso and the Dantzig
selector given such random matrices. Although such results are almost entirely
known, we provide a complete  analysis for a self-contained presentation.
Given these rates of convergence (cf. Theorem~\ref{thm:lasso} and
Theorem~\ref{thm:DS}),  one can exploit thresholding algorithms to adjust
the bias and get rid of excessive variables selected by an initial estimator 
relying on $\ell_1$ regularized minimization functions, for example,  
the Lasso or the Dantzig selector;  under the UUP or the RE type of conditions,
such procedures are shown to  select a sparse model, which contains the set 
of  variables in $\beta$ that are significant in their  absolute values; 
in addition, one can then conduct an ordinary least squares regression on 
such a sparse model to obtain a final estimator, whose bias is significantly 
reduced compared to the initial estimators. Such algorithms are proposed 
and analyzed in a series of papers, 
for example~\cite{CT07,MY09,WR08,Zhou09}.

\subsection{Restricted eigenvalue assumption for a random design}
\label{sec:random-RE}
We will define the family of random matrices that we consider
and the restricted eigenvalue assumption that we impose on such a 
random design. We need some more definitions.
\begin{definition}
\label{def:psi2-vector}
Let $Y$ be a random vector in $\R^p$; $Y$ is called isotropic
if for every $y \in \R^p$, $\expct{\abs{\ip{Y, y}}^2} = \twonorm{y}^2$,
and is $\psi_2$ with a constant $\alpha$ if for every $y \in \R^p$,
\beq
\norm{\ip{Y, y}}_{\psi_2} := \;
\inf \{t: \expct{\exp(\ip{Y,y}^2/t^2)} \leq 2 \} 
\; \leq \; \alpha \twonorm{y}.
\eeq
\end{definition}
The important examples of isotropic, subgaussian vectors are the Gaussian
random vector $Y = (h_1, \ldots, h_p)$ where $h_i, \forall i$ 
are independent $N(0, 1)$ random variables, and the random vector 
$Y = (\ve_1, \ldots, \ve_p)$ where $\ve_i, \forall i$ are independent, 
symmetric $\pm 1$ Bernoulli random variables. 

A subgaussian or $\psi_2$ operator is a random operator 
$\Gamma: \R^p \to \R^n$ of the form 
\ben
\label{eq::gamma-func}
\Gamma = \sum_{i=1}^n \ip{\Psi_i,\cdot} e_i,
\een
where $e_1, \ldots, e_n$ are the canonical basis of $\R^n$ and
$\Psi_1, \ldots, \Psi_n$ are independent copies of an isotropic $\psi_2$ 
vector $\Psi_0$ on $\R^p$. 
Note that throughout this paper, $\Gamma$ is represented by a 
random matrix $\Psi$ whose rows are $\Psi_1, \ldots, \Psi_n$.
Throughout this paper, we consider a random design matrix $X$ that is 
generated as follows:
\ben
\label{eq::rand-des}
X := \Psi \Sigma^{1/2}, \; \; \mbox{ where we assume } 
\Sigma_{jj} = 1, \forall j = 1, \ldots, p,
\een
and $\Psi$ is a random matrix whose rows $\Psi_1, \ldots, \Psi_n$ 
are independent copies of an isotropic $\psi_2$ vector 
$\Psi_0$ on $\R^p$ as in Definition~\ref{def:psi2-vector}. 
For a random design $X$ as in~\eqref{eq::rand-des}, we make the following
assumption on $\Sigma$. A slightly stronger condition has been originally 
defined in~\cite{ZGB09} in the context of Gaussian graphical modeling.

\begin{assumption}
\textnormal{\bf Restricted eigenvalue condition $RE(s, k_0, \Sigma)$.}
\label{def:memory}
Suppose $\Sigma_{jj} = 1, \forall j = 1, \ldots, p$, and for some integer 
$1\leq s \leq p$ and a positive number $k_0$, the following condition holds,
\beq
\label{eq::admissible-random}
\inv{K(s, k_0, \Sigma)} := \min_{\stackrel{J_0 \subseteq \{1, \ldots,
    p\},}{|J_0| \leq s}} 
\min_{\stackrel{\upsilon \not=0,}
{\norm{\upsilon_{J_0^c}}_1 \leq k_0 \norm{\upsilon_{J_0}}_1}}
\; \;  \frac{\norm{\Sigma^{1/2} \upsilon}_2}{\norm{\upsilon_{J_{0}}}_2} > 0.
\eeq
\end{assumption}
We note that similar to the case in Assumption~\ref{def:BRT-cond},
it is sufficient to check if for  $\upsilon \not = 0$ that is admissible 
to~\eqref{eq::admissible-random} and for $K(s, k_0, \Sigma) >0$, that
the following inequality
\ben
\label{eq::admissible-interpret-random}
\twonorm{\Sigma^{1/2} \upsilon} & \geq & 
\frac{\norm{\upsilon_{T_0}}_2}{K(s, k_0, \Sigma)} \; > \; 0
\een
holds, where $T_0$ corresponds to locations of the $s$ largest coefficients of 
$\upsilon$ in absolute values. Formally, we have
\begin{proposition}
\label{prop:magic-cone}
Let $1 \leq s \leq p/2$ be an integer and $k_0 > 0$.
Suppose $\delta \not= 0$ is admissible to~\eqref{eq::admissible-random},
or equivalently to~\eqref{eq::admissible} , in the sense of 
Definition~\ref{def:admit}; then
\ben
\label{eq::cone-max}
\norm{\delta_{T_0^c}}_1 & \leq & k_0 \norm{\delta_{T_0}}_1; 
\een
Hence~\eqref{eq::admissible-interpret-random}  is both necessary and 
sufficient to guarantee that~\eqref{eq::admissible-random} holds.
Similarly~\eqref{eq::admissible-interpret} is a necessary and sufficient 
condition for~\eqref{eq::admissible} to hold.
Moreover, suppose that $\Sigma$ satisfies Assumption~\ref{def:memory}, 
then for $\delta$ that is admissible to~\eqref{eq::admissible-random},
we have
$$\twonorm{\Sigma^{1/2} \delta} \geq 
\frac{\twonorm{\delta_{J_0}}}{K(s, k_0, \Sigma)}  >  0.$$
\end{proposition}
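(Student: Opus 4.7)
The plan is to prove the three pieces of the proposition in sequence, each one an essentially routine unpacking of the definitions; the main (and really only) content is the easy combinatorial observation that $T_0$ maximizes the relevant $\ell_1$ and $\ell_2$ partial norms among all index sets of size at most $s$.

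\textbf{Step 1 (the cone inequality \eqref{eq::cone-max}).} Let $\delta$ be admissible with witness $J_0$, i.e.\ $|J_0|\le s$ and $\|\delta_{J_0^c}\|_1 \le k_0 \|\delta_{J_0}\|_1$. I would first note that since $T_0$ indexes the $s$ coordinates of $\delta$ largest in absolute value, any index set $J_0$ with $|J_0|\le s$ satisfies $\|\delta_{T_0}\|_1 \ge \|\delta_{J_0}\|_1$. Writing $\|\delta\|_1 = \|\delta_{T_0}\|_1 + \|\delta_{T_0^c}\|_1 = \|\delta_{J_0}\|_1 + \|\delta_{J_0^c}\|_1$, subtracting gives $\|\delta_{T_0^c}\|_1 \le \|\delta_{J_0^c}\|_1$. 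Chaining these two facts with admissibility,
\[
\|\delta_{T_0^c}\|_1 \;\le\; \|\delta_{J_0^c}\|_1 \;\le\; k_0\|\delta_{J_0}\|_1 \;\le\; k_0\|\delta_{T_0}\|_1,
\]
which is \eqref{eq::cone-max}. A symmetric argument handles the ``$X$'' case verbatim.

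\textbf{Step 2 (equivalence of \eqref{eq::admissible-interpret-random} with \eqref{eq::admissible-random}, and similarly for $X$).} Sufficiency: suppose \eqref{eq::admissible-interpret-random} holds. For any admissible $\delta$ with witness $J_0$, the same ``top-$s$'' observation gives $\|\delta_{T_0}\|_2 \ge \|\delta_{J_0}\|_2$ (if $|J_0|=k\le s$, then $\|\delta_{J_0}\|_2$ is at most the $\ell_2$-norm of the top-$k$ coordinates, which is $\le \|\delta_{T_0}\|_2$). Hence
\[
\|\Sigma^{1/2}\delta\|_2 \;\ge\; \|\delta_{T_0}\|_2/K(s,k_0,\Sigma) \;\ge\; \|\delta_{J_0}\|_2/K(s,k_0,\Sigma),
\]
which, on infimizing over $(\delta,J_0)$, yields \eqref{eq::admissible-random} with the same constant. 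Necessity: suppose \eqref{eq::admissible-random} holds. By Step 1 any admissible $\delta$ is admissible with witness $J_0=T_0$, so applying \eqref{eq::admissible-random} to the pair $(\delta,T_0)$ gives $\|\Sigma^{1/2}\delta\|_2/\|\delta_{T_0}\|_2 \ge 1/K(s,k_0,\Sigma)$, which is \eqref{eq::admissible-interpret-random}. The argument for the deterministic $X$-version is identical with $\|X\upsilon\|_2/\sqrt n$ in place of $\|\Sigma^{1/2}\delta\|_2$.

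\textbf{Step 3 (the final displayed inequality).} This is essentially a restatement. Given Assumption~\ref{def:memory}, the definition~\eqref{eq::admissible-random} applied to an admissible pair $(\delta,J_0)$ gives directly
\[
\|\Sigma^{1/2}\delta\|_2 \;\ge\; \|\delta_{J_0}\|_2/K(s,k_0,\Sigma),
\]
and positivity follows because $\delta\ne 0$ and $J_0$ can be taken as $T_0$ by Step 1, in which case $\|\delta_{T_0}\|_2 > 0$ (otherwise $\delta=0$ by \eqref{eq::cone-max}), together with $K(s,k_0,\Sigma) < \infty$.

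There is no real obstacle: the whole proposition is a bookkeeping argument, and the only nontrivial ingredient is the elementary monotonicity property $\|\delta_{T_0^c}\|_1 \le \|\delta_{J_0^c}\|_1$ and $\|\delta_{T_0}\|_2 \ge \|\delta_{J_0}\|_2$ used in Steps 1 and 2. The value of the proposition is that it lets one replace the double minimum over $(J_0,\delta)$ by a single minimum over admissible $\delta$ with the canonical choice $J_0=T_0$, which is the form actually used in the sample-complexity analysis later.
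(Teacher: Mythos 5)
Your proof is correct and follows essentially the same route as the paper's: the chain $\|\delta_{T_0^c}\|_1 \le \|\delta_{J_0^c}\|_1 \le k_0\|\delta_{J_0}\|_1 \le k_0\|\delta_{T_0}\|_1$ via maximality of $\|\delta_{T_0}\|_1$, the comparison $\|\delta_{J_0}\|_2 \le \|\delta_{T_0}\|_2$ for the equivalence, and taking $J_0 = T_0$ for the converse are exactly the paper's steps. The only cosmetic difference is that you establish strict positivity via $\delta_{T_0} \ne 0$ while the paper argues $\delta_{J_0} \ne 0$ directly from the cone constraint; both are equivalent one-line observations.
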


We now define
\ben
\sqrt{\rho_{\min}(m)} & := & \min_{\stackrel{\twonorm{t} = 1}
{|\supp(t)| \leq m}} \; \; \twonorm{\Sigma^{1/2} t}, \\
\label{eq::eigen-Sigma}
\sqrt{\rho_{\max}(m)} & := & \max_{\stackrel{\twonorm{t} = 1}
{|\supp(t)| \leq m}} \; \; \twonorm{\Sigma^{1/2} t},
\een
where we assume that $\sqrt{\rho_{\max}(m)}$ is a constant for $m \leq p/2$.
If $RE(s, k_0, \Sigma)$ is  satisfied with $k_0 \geq 1$,
then the square submatrices of size $\leq 2s$ of $\Sigma$ are 
necessarily positive definite (see~\cite{BRT08}); hence
throughout this paper, we also assume that
\beq
\label{eq::eigen-admissible-random}
\rho_{\min}(2s) > 0.
\eeq
Note that when $\Psi$ is a Gaussian random matrix with i.i.d. $N(0, 1)$ 
random variables, $X$ as in~\eqref{eq::rand-des} corresponds to a random 
matrix with independent rows, such that each row is a random vector that 
follows a multivariate normal distribution $N(0, \Sigma)$: 
\begin{eqnarray}
\label{eq::rand-des-gauss}
X \ \mbox{has i.i.d. rows}\ \sim N(0, \Sigma),
\mbox{ where we assume } 
\Sigma_{jj} = 1, \forall j = 1, \ldots, p.
\end{eqnarray}
Finally, we need the following notation. For a set $V \subset \R^p$,
we let $\conv V$ denote the convex hull of $V$. For a finite set
$Y$, the cardinality is denoted by $|Y|$. Let $\Ball_2^p$ and 
$S^{p-1}$ be the unit Euclidean ball and the unit sphere respectively.

\subsection{The main theorem}
Throughout this section, we assume that $\Sigma$ 
satisfies~\eqref{eq::admissible-random} and~\eqref{eq::eigen-Sigma}
for $m = s$.  We assume $k_0 > 0$ and it is understood to be the 
same quantity throughout our discussion. Let us define
\beq
\label{eq::C-define}
\bar{C} = 3 (2 + k_0) K(s, k_0, \Sigma) \sqrt{\rho_{\max}(s)},
\eeq
where $k_0>0$ is understood to be the same as in~\eqref{eq::cone-constraint}.
Our main result in Theorem~\ref{thm:subgaussian-T} roughly says that for 
a random matrix $X := \Psi \Sigma^{1/2}$, which is 
the product of a random subgaussian ensemble $\Psi$ and a fixed positive 
semi-definite matrix $\Sigma^{1/2}$, the RE condition will be satisfied with 
overwhelming probability, given $n$ that is sufficiently large
(cf.~\eqref{eq::sample-size-gen}).
Before introducing the theorem formally, we define the class of vectors $E_s$, 
for a particular integer $1 \leq s \leq p/2$, that are relevant to the 
RE Assumption~\ref{def:BRT-cond} and~\ref{def:memory}.
For any given subset $J_0 \subset \{1, \ldots, p\}$ such that $|J_0| \leq s$,
we consider the set of vectors $\delta$ such that
\ben
\label{eq::cone-constraint}
\norm{\delta_{J_0^c}}_1 \leq k_0 \norm{\delta_{J_0}}_1
\een
holds for some $k_0 > 0$, subject to a normalization condition such that 
$\Sigma^{1/2} \delta \in S^{p-1}$; we then define the set $E_s'$ 
as unions of all vectors that satisfy the cone constraint 
as in~\eqref{eq::cone-constraint} with respect to any index set 
$J_0 \subset \{1, \ldots, p\}$ such that $|J_0| \leq s$;
\begin{eqnarray*}
\label{eq::elip}
E_s' = \left\{\delta: \twonorm{ \Sigma^{1/2} \delta} = 1
\; s.t. \; \exists J_0 \subseteq \{1, \ldots, p\} \; s.t. \; |J_0| \leq s
\text{ and~\eqref{eq::cone-constraint} holds} \right\}.
\end{eqnarray*}
We now define a even broader set: let $\delta_{T_0}$ be the 
subvector of $\delta$ confined to the locations of its $s$ largest 
coefficients:
\begin{eqnarray*}
\label{eq::elip}
E_s = \left\{\delta: \twonorm{\Sigma^{1/2} \delta} = 1
\; s.t. \; \norm{\delta_{T_0^c}}_1 \leq k_0 \norm{\delta_{T_0}}_1
\text{ holds,} \right\}
\end{eqnarray*}
\begin{remark}
\label{rem:twosets}
It is clear from Proposition~\ref{prop:magic-cone} that $E_s' \subset E_s$
for the same $k_0 > 0$.
\end{remark}
Theorem~\ref{thm:subgaussian-T} is
the main contribution of this paper.
\silent{
In more detail, for a given sparsity $1 \leq s \leq p/2$, we consider 
all subgaussian random matrices $\Psi_{n \times p}$ with $n$ 
independent isotropic $\psi_2$ random vectors in $\R^p$ 
(where elements in each row need not be independent) being its row vectors 
such that the RIP condition as in~\eqref{eq::RIP} holds with overwhelming 
probability when the sample size $n = \Theta(s \log (p/s))$ is large enough 
with respect to $p$ and $s$; }

\begin{theorem}
\label{thm:subgaussian-T}
Set $1 \leq n \leq p$, $0< \theta < 1$, and $s \leq p/2$. 
Let $\Psi_0$ be an isotropic 
$\psi_2$ random vector on $\R^p$ with constant $\alpha$ as in
Definition~\ref{def:psi2-vector} and $\Psi_1, \ldots, \Psi_n$ be 
independent copies of $\Psi_0$. 
Let $\Psi$ be a random matrix in $\R^{n \times p}$ whose rows are 
$\Psi_1, \ldots, \Psi_n$.
Let $\Sigma$  satisfy~\eqref{eq::admissible-random} 
and~\eqref{eq::eigen-Sigma}.
If $n$ satisfies for $\bar{C}$ as defined in~\eqref{eq::C-define}
\ben
\label{eq::sample-size-gen}
n >  \frac{c' \alpha^4}{\theta^2} 
\max\left(\bar{C}^2 s \log (5ep/s), 9 \log p\right),
\een
then with probability at least $1- 2\exp(-\bar{c} \theta^2 n/\alpha^4)$, 
we have for all $\delta \in E_s$,
\begin{eqnarray}
\label{eq::phi-bound}
1 - \theta & \leq &
\frac{\twonorm{\Psi \Sigma^{1/2} \delta}}{\sqrt{n}}\; \leq \; 1 + \theta, 
\; \; \text{ and } \\
\label{eq::phi-bound-column}
 \forall \rho_i, \; \; \; 
1 - \theta & \leq & \frac{\twonorm{\Psi \rho_i}}{\sqrt{n}} 
\; \leq \; 1 + \theta,
\end{eqnarray}
where $\rho_1, \ldots, \rho_p$ are column vectors of $\Sigma^{1/2}$, and 
$c', \bar{c} >0$ are absolute constants.
\end{theorem}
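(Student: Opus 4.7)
The plan is to reduce the uniform bound \eqref{eq::phi-bound} to a Gaussian-complexity estimate on the set $T := \Sigma^{1/2}E_s\subset S^{p-1}$ and then apply a standard subgaussian empirical-process inequality (e.g.\ the Mendelson--Pajor--Tomczak--Jaegermann estimate underlying Theorem~\ref{thm:subgaussian}): for any $T\subseteq \R^p$, with probability at least $1-2\exp(-\bar c\theta^2 n/\alpha^4)$ every $y\in T$ satisfies $\bigl|\twonorm{\Psi y}/\sqrt n-\twonorm{y}\bigr|\leq \theta$, provided $n\gtrsim \alpha^4\ell(T)^2/\theta^2$, where $\ell(T) := \mathbb{E}\sup_{y\in T}\ip{g,y}$ and $g\sim N(0,I_p)$. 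The column statement \eqref{eq::phi-bound-column} is then a separate $p$-fold union bound of single-vector subgaussian concentration applied to the columns $\rho_i$ of $\Sigma^{1/2}$, each of which has $\twonorm{\rho_i}^2=\Sigma_{ii}=1$; this is precisely what the $9\log p$ summand in the maximum in \eqref{eq::sample-size-gen} is there for.

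The core of the argument is the bound on $\ell(\Sigma^{1/2}E_s)$. The first step is purely geometric: given $\delta\in E_s$, Proposition~\ref{prop:magic-cone} and the normalization $\twonorm{\Sigma^{1/2}\delta}=1$ give $\twonorm{\delta_{T_0}}\leq K(s,k_0,\Sigma)$. Decompose $\delta_{T_0^c}$ into successive blocks $w_{J_1},w_{J_2},\ldots$ of disjoint supports of size $s$ in decreasing order of magnitude; the standard sorting estimates $\twonorm{w_{J_1}}\leq\twonorm{\delta_{T_0}}$ (since $\norm{w_{J_1}}_\infty\leq \norm{\delta_{T_0}}_1/s$) and $\twonorm{w_{J_{j+1}}}\leq \norm{w_{J_j}}_1/\sqrt s$ together yield $\sum_{j\geq 1}\twonorm{w_{J_j}}\leq(1+k_0)\twonorm{\delta_{T_0}}$, so that
\[
E_s \;\subseteq\; (2+k_0)\,K(s,k_0,\Sigma)\cdot\conv(V_s),\qquad V_s:=\{x\in\R^p : |\supp(x)|\leq s,\ \twonorm{x}\leq 1\}.
\]
The second step is the Gaussian-complexity calculation itself. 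Because $\conv(V_s)$ is convex and symmetric,
\[
\ell(\Sigma^{1/2}E_s) \;\leq\; (2+k_0)\,K(s,k_0,\Sigma)\,\mathbb{E}\max_{|J|\leq s}\twonorm{(\Sigma^{1/2}g)_J}.
\]
For each fixed $J$, $(\Sigma^{1/2}g)_J$ is a centered Gaussian with covariance $\Sigma_{JJ}$, so $\mathbb{E}\twonorm{(\Sigma^{1/2}g)_J}\leq \sqrt{\mathrm{tr}(\Sigma_{JJ})}=\sqrt s$, and the map $g\mapsto\twonorm{(\Sigma^{1/2}g)_J}$ is $\sqrt{\rho_{\max}(s)}$-Lipschitz since its operator norm squared is $\lambda_{\max}(\Sigma_{JJ})\leq\rho_{\max}(s)$. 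Gaussian concentration plus a union bound over the $\binom{p}{s}\leq(ep/s)^s$ subsets $J$ of size $\leq s$ gives $\mathbb{E}\max_{|J|\leq s}\twonorm{(\Sigma^{1/2}g)_J}\lesssim \sqrt{\rho_{\max}(s)\,s\log(ep/s)}$, and hence $\ell(\Sigma^{1/2}E_s)\lesssim \bar C\sqrt{s\log(5ep/s)}$ for $\bar C$ as in \eqref{eq::C-define}. Plugging this bound into the deviation inequality of the first paragraph, the first term in the maximum in \eqref{eq::sample-size-gen} is precisely of the form $n\gtrsim \alpha^4\ell(\Sigma^{1/2}E_s)^2/\theta^2$, which yields \eqref{eq::phi-bound}.

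The main technical obstacle is the Gaussian-complexity step: one needs the $\log(ep/s)$ factor, not the cheap $\log p$ factor one would obtain from the crude estimate $\mathbb{E}\sup_{\delta\in E_s}\ip{\Sigma^{1/2}g,\delta}\leq \mathbb{E}\norm{\Sigma^{1/2}g}_\infty\sup_{\delta\in E_s}\norm{\delta}_1$. Extracting the $\log(ep/s)$ factor forces one to use the peeling decomposition so that the relevant supremum becomes a maximum over $s$-subsets rather than over individual coordinates, and to exploit the Lipschitz constant $\sqrt{\rho_{\max}(s)}$ on each $s$-subset in place of the ambient $\sqrt{\norm{\Sigma}_{\mathrm{op}}}$. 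Once this step is in place, the rest is routine: \eqref{eq::phi-bound} follows by plugging the Gaussian-complexity bound into the subgaussian matrix deviation inequality, and \eqref{eq::phi-bound-column} follows from a $p$-fold union bound of one-vector subgaussian concentration.
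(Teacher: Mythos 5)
Your proposal is correct and follows essentially the same route as the paper: apply the Mendelson--Pajor--Tomczak-Jaegermann uniform deviation theorem (Theorem~\ref{thm:subgaussian}) to the set $\Upsilon=\Sigma^{1/2}(E_s)\subset S^{p-1}$, and bound $\ell_*(\Upsilon)$ by peeling $\delta$ into $s$-sparse blocks exactly as in Section~\ref{sec:decomp-vec} and Lemma~\ref{lemma:U-number}, reducing everything to the Gaussian complexity of $\Sigma^{1/2}$ applied to the union of $s$-sparse balls. The only (immaterial) divergences are in the sub-steps: you bound $\mathbb{E}\max_{|J|\leq s}\twonorm{(\Sigma^{1/2}g)_J}$ via Gaussian concentration of the $\sqrt{\rho_{\max}(s)}$-Lipschitz norms plus a union bound over the ${p \choose s}$ supports, whereas Lemma~\ref{lemma:tilde-ell-star} uses a $1/2$-net of $\tilde{U}_s$ of cardinality $5^s{p \choose s}$ together with the Gaussian maximal inequality; and you obtain \eqref{eq::phi-bound-column} by a $p$-fold union bound of single-vector concentration, whereas the paper applies Theorem~\ref{thm:subgaussian} a second time to the finite set $\Phi$ with $\ell_*(\Phi)\leq 3\sqrt{\log p}$ (Lemma~\ref{lemma:column-space}); both variants give the same order and the same sample-size condition \eqref{eq::sample-size-gen}.
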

We now state some immediate consequences of 
Theorem~\ref{thm:subgaussian-T}.
Consider the random design $X = \Psi \Sigma^{1/2}$ as defined
in Theorem~\ref{thm:subgaussian-T}. It is clear when all columns of 
$X$ have an Euclidean norm close to $\sqrt{n}$,
as guaranteed by~\eqref{eq::phi-bound-column} for 
$0< \theta<1$ that is small, it makes sense to discuss the RE 
condition in the form of~\eqref{eq::admissible}. 
We now define the following event $\RE$ on a random design $X$,
which provides an upper bound on $K(s, k_0, X)$ for a given $k_0 >0$, 
when $X$ satisfies Assumption $RE(s, k_0, X)$:
\begin{eqnarray}
\label{eq::good-random-design-RE}
\RE(\theta) := \left\{X: RE(s, k_0, X) \; \text{ holds with } \; 
0 < K(s, k_0, X) \leq \frac{K(s, k_0, \Sigma)}{1-\theta} \right\}
\end{eqnarray}
Under Assumption~\ref{def:memory}, we consider the set of vectors 
$u := \Sigma^{1/2} \delta$, 
where $\delta \not= 0$ is admissible to~\eqref{eq::admissible-random},
and show a uniform bound on the concentration of each individual random 
variable of the form $\twonorm{\Gamma u}^2 
:= \twonorm{X \delta}^2$ around its mean.
By  Proposition~\ref{prop:magic-cone}, we have 
$\twonorm{u} = \twonorm{\Sigma^{1/2} \delta} > 0$.
We can now apply \eqref{eq::phi-bound} to each 
$(\delta /\twonorm{\Sigma^{1/2} \delta}) \not=0$,
which belongs to $E_s'$ and hence $E_s$ (see Remark~\ref{rem:twosets}),
and conclude that 
\begin{eqnarray}
\label{eq::X-bound}
0< (1 - \theta) \twonorm{\Sigma^{1/2} \delta} \; \leq \; 
\frac{\twonorm{X \delta}}{\sqrt{n}} 
& \leq & (1 + \theta) \twonorm{\Sigma^{1/2} \delta}
\end{eqnarray}
hold for all $\delta \not= 0$ that is 
admissible to~\eqref{eq::admissible-random},
with probability at least $1- 2 \exp(-\bar{c} \theta^2 n/\alpha^4)$.
Now the lower bound in~\eqref{eq::X-bound} implies that
\ben
\label{eq::admissible-instance}
\frac{\twonorm{X \delta}}{\sqrt{n}} \geq
(1 - \theta) \twonorm{\Sigma^{1/2} \delta} \; \geq \;
(1 - \theta) \frac{ \twonorm{\delta_{T_0}} }{K(s, k_0, \Sigma)} > 0,
\een
where $T_0$ is the locations of largest  coefficients of $t$ in absolute values.
Hence~\eqref{eq::phi-bound-column} and
event $\RE(\theta)$ hold simultaneously,
with probability at least $1- 2 \exp(-\bar{c} \theta^2 n/\alpha^4)$,
given~\eqref{eq::admissible-interpret-random}
and Proposition~\ref{prop:magic-cone},
so long as $n$ satisfies~\eqref{eq::sample-size-gen}.
\begin{remark}
It is clear that this result generalizes the notion of restricted 
isometry property (RIP) introduced in~\cite{CT05}. In particular, when
$\Sigma = I$ and $\delta$ is $s$-sparse,~\eqref{eq::RIP} holds for $X$
with $\theta_s = \theta$, given~\eqref{eq::X-bound}.
\end{remark}

\section{Proof Theorem~\ref{thm:subgaussian-T}}
\label{sec:sub-gauss}
In this section, we first state a definition and then two lemmas
in Section~\ref{sec:complexity}, from which we show the proof of
Theorem~\ref{thm:subgaussian-T} in Section~\ref{sec:main-proof}.
We shall identify the basis with the canonical basis 
$\{e_1, e_2, \ldots, e_p\}$ of $\R^p$, where 
$e_i = \{0, \ldots, 0, 1, 0, \ldots, 0\}$, and
it is to be understood that $1$ appears in the $i$th position and
$0$ appears elsewhere.
\begin{definition}
For a subset $V \subset \R^p$, we let
\beq
\label{eq::entropy}
\ell_*(V) = \expct{\sup_{t \in V} \abs{\sum_{i=1}^p g_i t_i}}
\eeq
where $t = (t_i)^p_{i=1} \in \R^p$ and $g_1, \ldots, g_p$ are independent
$N(0, 1)$ Gaussian random variables.
\end{definition}

\subsection{The complexity measures}
\label{sec:complexity}
The subset  $\Upsilon$ that is relevant to our result is a subset
of the sphere $S^{p-1}$ such that the linear function 
$\Sigma^{1/2}: E_s \to \R^p$ maps $\delta \in E_s$ onto:
\ben
\label{eq::sparse-sphere}
\Upsilon & := & \Sigma^{1/2}(E_s) = \{v \in \R^p:  v = \Sigma^{1/2} \delta 
\text{ for some } \delta \in E_s\}.
\een

\silent{
It is clear that the following assertion holds:
For two subsets $T_1, T_2 \subset \R^p$, 
\bens
\label{eq::entropy-union}
\ell_*(T_1 \cup T_2) 
& = & \expct{\sup_{t \in T_1 \cup T_2} \abs{\sum_{i=1}^p g_i t_i}} \\
& \leq & 
\expct{\sup_{t \in T_1} \abs{\sum_{i=1}^p g_i t_i}} +
\expct{\sup_{t \in T_2} \abs{\sum_{i=1}^p g_i t_i}}  \\
& =: & 
\ell_*(T_1) + \ell_*(T_2)
\eens
where $t = (t_i)^p_{i=1} \in \R^p$ and $g_1, \ldots, g_p$ are independent
$N(0, 1)$ Gaussian random variables.
}

We now show a bound on functional of $\ell_*(\Upsilon)$,
for which we crucially exploit the cone property of vectors in $E_s$, the RE 
condition on $\Sigma$, and the bound of $\rho_{\max}(s)$. 
Lemma~\ref{lemma:U-number} is one of the main technical
contributions of this paper.
\begin{lemma}\textnormal{(Complexity of a subset of $S^{p-1}$)}
\label{lemma:U-number}
Let $\Sigma$ satisfy~\eqref{eq::admissible-random} and~\eqref{eq::eigen-Sigma}.
Let $h_1, \ldots, h_p$ be independent $N(0, 1)$ random variables.
Let $1 \leq s \leq p/2$ be an integer. Then
\ben 
 \label{eq::Sigma-delta-entropy} 
\ell_*(\Upsilon) & := & \expct{\sup_{y \in \Upsilon}} 
\abs{\sum_{i=1}^p h_i y_i} 
= \expct{\sup_{\delta \in E_s} \abs{\ip{h, \Sigma^{1/2} \delta}} }
\leq \bar{C} \sqrt{s \log (cp/s)}
\een
where $\bar{C}$ is defined in~\eqref{eq::C-define} and $c = 5e$. 
\end{lemma}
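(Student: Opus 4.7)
The plan is to dualize and then decouple the $\delta$-dependent supremum into a deterministic geometric factor, which absorbs the cone structure of $E_s$, and a random factor that is a Gaussian maximum over $s$-subsets of coordinates. Writing $g := \Sigma^{1/2}h$, the coordinates $g_i = \ip{h,\rho_i}$ are (correlated) $N(0,1)$ because $\Sigma_{ii}=1$, and $\ip{h,\Sigma^{1/2}\delta} = \ip{g,\delta}$, so
\[
\ell_*(\Upsilon) \;=\; \expct{\sup_{\delta\in E_s}\abs{\ip{g,\delta}}}.
\]
For any $\delta\in E_s$, let $T_0$ denote the indices of its $s$ largest coordinates in magnitude, and partition $T_0^c$ into successive blocks $T_1,T_2,\ldots$ of size $s$ (the last one possibly smaller), each containing the next $s$ largest coordinates of $|\delta|$. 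Because $\delta$ is admissible to~\eqref{eq::admissible-random} with $J_0=T_0$ and $\twonorm{\Sigma^{1/2}\delta}=1$, Proposition~\ref{prop:magic-cone} immediately yields $\twonorm{\delta_{T_0}} \le K(s,k_0,\Sigma)$.

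Next I would carry out the classical shelling argument: each coordinate of $\delta_{T_j}$ is bounded by the average magnitude on $T_{j-1}$, so $\twonorm{\delta_{T_j}} \le \norm{\delta_{T_{j-1}}}_1/\sqrt{s}$. Summing telescopically and using $\norm{\delta_{T_0}}_1 \le \sqrt{s}\twonorm{\delta_{T_0}}$ together with the cone constraint gives
\[
\sum_{j\ge 1}\twonorm{\delta_{T_j}} \;\le\; \frac{\norm{\delta}_1}{\sqrt{s}} \;\le\; (1+k_0)\twonorm{\delta_{T_0}} \;\le\; (1+k_0)K(s,k_0,\Sigma).
\]
Block-wise Cauchy--Schwarz then produces the pointwise (in $h$) bound
\[
\abs{\ip{g,\delta}} \;\le\; \sum_{j\ge 0}\twonorm{g_{T_j}}\twonorm{\delta_{T_j}} \;\le\; \Big(\max_{|T|\le s}\twonorm{g_T}\Big)\cdot (2+k_0)\,K(s,k_0,\Sigma),
\]
which is uniform over $\delta\in E_s$ since every $T_j$ has size at most $s$.

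It remains to bound $\expct{\max_{|T|\le s}\twonorm{g_T}}$. For fixed $T$, $\twonorm{g_T}^2$ has expectation $|T|\le s$, and the map $h\mapsto\twonorm{g_T}$ is Lipschitz with constant $\sqrt{\lambda_{\max}(\Sigma_{T,T})} \le \sqrt{\rho_{\max}(s)}$ by~\eqref{eq::eigen-Sigma}. Gaussian Lipschitz concentration together with a union bound over the $\binom{p}{s}\le(ep/s)^s$ subsets of size $s$, followed by the standard tail integral, produces
\[
\expct{\max_{|T|\le s}\twonorm{g_T}} \;\le\; 3\sqrt{\rho_{\max}(s)\, s\log(5ep/s)},
\]
after which multiplying by $(2+k_0)\,K(s,k_0,\Sigma)$ gives exactly $\bar C\sqrt{s\log(5ep/s)}$ as defined in~\eqref{eq::C-define}.

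The main obstacle I anticipate is the shelling step: the $\delta$-dependent block decomposition must be arranged so that after Cauchy--Schwarz the only surviving random quantity is the \emph{uniform} Gaussian maximum $\max_{|T|\le s}\twonorm{g_T}$, and the constants must be tracked carefully so that the cone constraint combined with $RE(s,k_0,\Sigma)$ collapses $\sum_{j\ge 1}\twonorm{\delta_{T_j}}$ into a bounded multiple of $\twonorm{\delta_{T_0}}\le K(s,k_0,\Sigma)$. Once this geometric decoupling is in place, the Gaussian maximum is a routine concentration plus union-bound computation, with $\sqrt{\rho_{\max}(s)}$ entering precisely as the Lipschitz constant of $h\mapsto\twonorm{g_T}$.
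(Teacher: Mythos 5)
Your proof is correct and follows the same core strategy as the paper's: the identical shelling decomposition $T_0, T_1, T_2, \ldots$, the bound $\twonorm{\delta_{T_0}} \leq K(s,k_0,\Sigma)$ from admissibility together with $\twonorm{\Sigma^{1/2}\delta}=1$, the cone constraint giving $\sum_{j\geq 1}\twonorm{\delta_{T_j}} \leq (1+k_0)K(s,k_0,\Sigma)$, and the resulting decoupling of $\abs{\ip{g,\delta}}$ into the geometric factor $(2+k_0)K(s,k_0,\Sigma)$ times the sparse Gaussian supremum $\sup_{t \in U_s}\abs{\ip{t,\Sigma^{1/2}h}} = \max_{|T|\leq s}\twonorm{g_T}$; this is exactly the chain of inequalities culminating in~\eqref{eq::last-step}, \eqref{eq::KS1} and~\eqref{eq::KS2}. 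The only place you diverge is in bounding that last Gaussian supremum: the paper passes through a covering net $\Pi_s$ of $\tilde{U}_s$ with $U_s \subset 2\conv\Pi_s$ (Lemma~\ref{lemma:Pi-cover}) and applies the Gaussian maximal inequality of Lemma~\ref{lemma:guass-maximality} to the finitely many net points, whereas you apply Gaussian Lipschitz concentration to each fixed support $T$ (with Lipschitz constant $\sqrt{\lambda_{\max}(\Sigma_{T,T})}\leq\sqrt{\rho_{\max}(s)}$, which is correct) and then union-bound over the $\binom{p}{s}$ supports and integrate the tail. Both routes are standard and yield the same $\sqrt{\rho_{\max}(s)\,s\log(5ep/s)}$ order; the precise numerical constant from your tail integration should be verified against the factor $3$ in~\eqref{eq::C-define}, but nothing structural is at stake.
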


\begin{remark}
We will also show in our fundamental proof for the zero-mean Gaussian 
random ensemble with covariance matrix being $\Sigma$,
where such complexity measure is used exactly in Section~\ref{sec:Gaussian}.
There we also give explicit constants.
\end{remark}

Now let $\Sigma^{1/2} := (\rho_{ij})$ and 
$\rho_1, \ldots, \rho_p$ denote its $p$ column vectors.
By definition of $\Sigma = (\Sigma^{1/2})^2$, it holds that 
$\twonorm{\rho_i}^2 = \sum_{j = 1}^p \rho_{ij}^2 = \Sigma_{ii} = 1,$
for all $i = 1, \ldots, p$. Thus we have the following.
\begin{lemma}
\label{lemma:column-space}
Let $\Phi =\{\rho_1, \ldots, \rho_p \}$ be the subset of vectors
in $S^{p-1}$ that correspond to columns of $\Sigma^{1/2}$. 
It holds that $\ell_*(\Phi) \leq 3 \sqrt{\log p}.$
\end{lemma}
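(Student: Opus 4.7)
The plan is to reduce $\ell_*(\Phi)$ to the expected maximum of $p$ standard Gaussian random variables and then invoke the classical Gaussian maximal inequality. First, I would unpack the definition: for the finite set $\Phi = \{\rho_1,\ldots,\rho_p\}$, the supremum is just a maximum, so
\[
\ell_*(\Phi) \;=\; \expct{\max_{1 \leq j \leq p} \abs{\textstyle\sum_{i=1}^p h_i \rho_{ij}}} \;=\; \expct{\max_{1 \leq j \leq p} \abs{\ip{h, \rho_j}}},
\]
where $h = (h_1,\ldots,h_p)$ is a standard Gaussian vector in $\R^p$.

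Next I would identify the marginal distributions. Since $h \sim N(0, I_p)$ and $\twonorm{\rho_j} = \sqrt{\Sigma_{jj}} = 1$ for every $j$ by the normalization built into Assumption~\ref{def:memory} (see also the computation just before the lemma statement), each $Z_j := \ip{h, \rho_j}$ is a centered Gaussian with variance $\twonorm{\rho_j}^2 = 1$. Thus $\ell_*(\Phi) = \expct{\max_{j \leq p} \abs{Z_j}}$ is the expected maximum modulus of $p$ (possibly correlated) $N(0,1)$ random variables.

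Third, I would invoke the standard Gaussian maximal inequality: for any collection $Z_1,\ldots,Z_N$ of $N(0,1)$ random variables (no independence needed), $\expct{\max_{j \leq N}\abs{Z_j}} \leq \sqrt{2\log(2N)}$. This is a one-line consequence of Jensen applied to the moment generating function, together with a union bound over the $2N$ events $\{\pm Z_j \geq t\}$. Applying it with $N = p$ gives $\ell_*(\Phi) \leq \sqrt{2\log(2p)}$.

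Finally, I would verify that $\sqrt{2\log(2p)} \leq 3\sqrt{\log p}$ in the relevant regime. Squaring, this reduces to $2\log 2 + 2\log p \leq 9 \log p$, i.e. $2\log 2 \leq 7 \log p$, which holds for all $p \geq 2$ (and $p \geq n \geq 2$ is the standing high-dimensional assumption of the paper). There is no real obstacle here; the only mildly delicate step is pinning down the constant $3$, which is loose enough to absorb the extra $\log 2$ term.
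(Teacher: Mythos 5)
Your proof is correct and follows essentially the same route as the paper: both reduce $\ell_*(\Phi)$ to the expected maximum modulus of $p$ unit-variance Gaussians $\ip{h,\rho_j}$ (using $\twonorm{\rho_j}^2=\Sigma_{jj}=1$) and then apply a Gaussian maximal inequality. The only difference is cosmetic: the paper cites the Ledoux--Talagrand bound $\expct{\max_j|X_j|}\leq 3\sqrt{\log N}\max_j\sqrt{\expct{X_j^2}}$ directly, whereas you derive the sharper $\sqrt{2\log(2p)}$ bound from first principles and then check it is at most $3\sqrt{\log p}$ for $p\geq 2$.
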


\subsection{Proof of Theorem~\ref{thm:subgaussian-T}}
\label{sec:main-proof}
\begin{proofof2}
The key idea to prove Theorem~\ref{thm:subgaussian-T} is 
to apply the powerful Theorem~\ref{thm:subgaussian} as shown 
in~\cite{MPT07,MPT08}(Corollary 2.7, Theorem 2.1 respectively)
to the subset $\Upsilon$ of the sphere $S^{p-1}$, 
as defined in~\eqref{eq::sparse-sphere}.
As explained in~\cite{MPT08}, in the context of Theorem~\ref{thm:subgaussian},
the functional $\ell_*(\Upsilon)$ is the complexity measure of the 
set $\Upsilon$, which measures the extent in which probabilistic bounds 
on the concentration of each individual random variable of the form 
$\twonorm{\Gamma v}^2$ around its mean can be combined to form a bound 
that holds uniformly for all $v \in \Upsilon$. 
\begin{theorem}\textnormal{~\citep{MPT07,MPT08}}
\label{thm:subgaussian}
Set $1 \leq n \leq p$ and $0< \theta < 1$. Let $\Psi$ be an isotropic 
$\psi_2$ random vector on $\R^p$ with constant $\alpha$, and 
$\Psi_1, \ldots, \Psi_n$
be independent copies of $\Psi$.
Let $\Gamma$ be as defined in~\eqref{eq::gamma-func} and let 
$V \subset S^{p-1}$. If $n$ satisfies
\ben
\label{eq::sample-size-MPT}
n > \frac{c' \alpha^4}{\theta^2} \ell_*(V)^2,
\een
Then with probability at least $1- \exp(-\bar{c} \theta^2 n/\alpha^4)$, 
for all $v \in V$, we have
\begin{eqnarray}
\label{eq::Gamma-RIP}
1 - \theta \; \leq \; \twonorm{\Gamma v}/\sqrt{n} & \leq & 1 + \theta,
\end{eqnarray}
where $c', \bar{c} >0$ are absolute constants.
\end{theorem}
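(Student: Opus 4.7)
The plan is to reduce the theorem to a uniform deviation inequality for the quadratic empirical process $v \mapsto \twonorm{\Gamma v}^2/n$ indexed by $V \subset S^{p-1}$, and then control this process by a generic chaining argument in the mixed $\psi_2$/$\psi_1$ metric induced by the rows $\Psi_i$ of $\Gamma$. Once the squared norm is controlled uniformly close to $1$, the claim about $\twonorm{\Gamma v}/\sqrt{n}$ follows from $|x-1|\leq |x^2-1|$ for $x\geq 0$ and a harmless readjustment of constants.

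I would first handle the pointwise case. For fixed $v\in S^{p-1}$, isotropy gives $\expct{\ip{\Psi_i,v}^2}=\twonorm{v}^2=1$, and the $\psi_2$ hypothesis makes $\ip{\Psi_i,v}^2-1$ a centered $\psi_1$ random variable with norm $O(\alpha^2)$, so Bernstein's inequality gives sharp concentration of $\twonorm{\Gamma v}^2/n$ around $1$ for each individual $v$. To upgrade this to a uniform bound, introduce the natural chaining distance $d(v,w):=\alpha\twonorm{v-w}$: because $\Psi_i$ is $\psi_2$ with constant $\alpha$, the $\psi_2$-norm of $\ip{\Psi_i,v-w}$ is $\leq d(v,w)$; and since $V\subset S^{p-1}$, the $\psi_2$-norm of $\ip{\Psi_i,v+w}$ is $\leq 2\alpha$. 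Consequently the increment $\ip{\Psi_i,v}^2-\ip{\Psi_i,w}^2=\ip{\Psi_i,v-w}\cdot\ip{\Psi_i,v+w}$ has $\psi_1$-norm $\lesssim \alpha d(v,w)$, so the centered process $Z_v:=\twonorm{\Gamma v}^2/n-1$ satisfies Bernstein-type increment tails.

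Next I would apply Talagrand's generic chaining for processes with mixed subgaussian/subexponential increments, together with the majorizing measure theorem $\gamma_2(V,\twonorm{\cdot})\asymp \ell_*(V)$, to obtain
$$\expct{\sup_{v\in V}\abs{Z_v}}\;\lesssim\;\frac{\alpha^2\ell_*(V)}{\sqrt n}+\frac{\alpha^2\ell_*(V)^2}{n}.$$
Under the sample-size hypothesis $n>c'\alpha^4\ell_*(V)^2/\theta^2$, both terms on the right-hand side are $O(\theta)$. I would then promote this expectation bound to the claimed high-probability statement either by invoking Talagrand's concentration inequality for unbounded empirical processes (whose Gaussian part supplies the exponent $\bar c\,\theta^2 n/\alpha^4$) or by directly using the tail form of the generic chaining inequality. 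Adjusting $\theta$ by a harmless constant factor then yields the stated $1\pm\theta$ window.

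The principal obstacle is the chaining step itself. A naive Dudley entropy integral plus a union bound over an $\varepsilon$-net loses a spurious $\sqrt{\log}$ factor and yields a weaker tail, so achieving the sharp sample complexity $\alpha^4\ell_*(V)^2/\theta^2$ and the exponent $\theta^2 n/\alpha^4$ genuinely requires Talagrand's majorizing measures combined with a Bernstein-type chaining bound. The delicate point is to split each increment into its subgaussian and subexponential parts and balance the two chains so that neither dominates and no $\log$-factor appears in the regime of interest. This is the technically heaviest piece of the argument and is where the MPT approach uses geometric functional-analytic machinery beyond what elementary concentration supplies.
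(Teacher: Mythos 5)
The paper does not actually prove this statement: Theorem~\ref{thm:subgaussian} is imported verbatim from \cite{MPT07} (Corollary~2.7) and \cite{MPT08} (Theorem~2.1) and is used as a black box in Section~\ref{sec:main-proof} to establish Theorem~\ref{thm:subgaussian-T}. There is therefore no in-paper proof to compare against, and your decision to reconstruct the MPT argument is the only option. Your outline is essentially the correct one and matches the route taken in those references: pass to the quadratic process $Z_v = \twonorm{\Gamma v}^2/n - 1$, observe that the increments factor as $\ip{\Psi_i, v-w}\cdot\ip{\Psi_i, v+w}$ and hence are $\psi_1$ with norm of order $\alpha^2\twonorm{v-w}$, run a generic chaining argument adapted to mixed subgaussian/subexponential tails, and invoke the majorizing measure theorem to replace $\gamma_2(V,\twonorm{\cdot})$ by $\ell_*(V)$; the resulting bound $\alpha^2\ell_*(V)/\sqrt{n} + \alpha^2\ell_*(V)^2/n$ is $O(\theta)$ under \eqref{eq::sample-size-MPT}, and the passage from the squared norm to \eqref{eq::Gamma-RIP} via $|x-1|\le|x^2-1|$ is standard. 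Two cautions on the final step. First, of the two routes you offer for the high-probability upgrade, only the second is safe: Talagrand's concentration inequality for empirical processes requires a uniformly bounded class, which $\ip{\Psi_i,v}^2$ is not, so you must use the tail form of the chaining bound directly (this is what MPT do, and it is where the exponent $\bar{c}\theta^2 n/\alpha^4$ comes from). Second, you should record somewhere that $\alpha\gtrsim 1$ for an isotropic $\psi_2$ vector, which is needed so that the quadratic term $\alpha^2\ell_*(V)^2/n \lesssim \theta^2/\alpha^2$ is dominated by $\theta$. With those points made explicit, your sketch is a faithful account of the cited result.
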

It is clear that~\eqref{eq::phi-bound} follows immediately from 
Theorem~\ref{thm:subgaussian} by having $V = \Upsilon$, 
given Lemma~\ref{lemma:U-number}.
In fact, we can now finish proving Theorem~\ref{thm:subgaussian-T}
by applying Theorem~\ref{thm:subgaussian} twice, 
by having $V = \Upsilon$ and $V = \Phi$ respectively:
the lower bound on $n$ is obtained by applying the upper bounds
on $\ell_*(\Upsilon)$ as given in Lemma~\ref{lemma:U-number} and
on $\ell_*(\Phi)$ as in Lemma~\ref{lemma:column-space}.
We then apply the union bound to bound the probability of the bad
events when~\eqref{eq::Gamma-RIP} does not hold for some
$v \in \Upsilon$ or some $v \in \Phi$ respectively.
\end{proofof2}

\section{$\ell_p$ convergence for the Lasso and the Dantzig selector}
\label{sec:estimators}
Throughout this section, we assume that  $0< \theta < 1$,
and $c', \bar{c} >0$ are absolute constants.
Conditioned on the random design as in~\eqref{eq::rand-des} 
satisfying properties as guaranteed in Theorem~\ref{thm:subgaussian-T}, 
we proceed to treat $X$ as a deterministic design, for which
both the RE condition as described in~\eqref{eq::good-random-design-RE} 
and condition $\F(\theta)$ defined as below hold, 
\begin{eqnarray}
\label{eq::good-random-design-diag}
\F(\theta) := \left\{X: \forall j = 1, \ldots, p,\;
1 - \theta \leq \frac{\twonorm{X_j}}{\sqrt{n} } \leq 1 + \theta \right\},
\end{eqnarray}
where $X_1, \ldots, X_p$ are the column vectors of $X$:
Formally, we consider the set $\X \ni X$ of random designs that 
satisfy both condition $\RE(\theta)$ and $\F(\theta)$, for some
$0< \theta < 1$.
By Theorem~\ref{thm:subgaussian-T}, we have for 
$n$ satisfy the lower bound in~\eqref{eq::sample-size-gen},
$$\prob{\X} :=  \prob{\RE(\theta) \cap \F(\theta)} 
\geq 1- 2\exp(-\bar{c} \theta^2 n/\alpha^4).$$
It is clear that on $\X$, Assumption \ref{def:memory} holds for $\Sigma$.
We now bound the correlation between the noise and covariates of $X$
for $X \in \X$, where we also define a constant $\basepen$ which is used 
throughout the rest of this paper. 
For each $a \geq 0$,  for $X \in \F(\theta)$, let 
\ben
\label{eq::low-noise}
{\T_a} := 
\biggl \{\e: \norm{\frac{X^T \e}{n}}_{\infty} \leq (1+ \theta) \basepen,
\; \text{ where } \;  X \in \F(\theta), \text{ for } 0< \theta < 1\biggr \},
\een
where $\basepen = \sigma \sqrt{1 + a} \sqrt{(2\log p)/n}$, where
$a \geq 0$; we have (cf. Proposition~\ref{lemma:gaussian-noise})
\ben
\label{eq::prob-T}
\; \; \; \;
\prob{\T_a}  \geq 
1 - (\sqrt{\pi \log p} p^a)^{-1};
\een
In fact, for such a bound to hold, we only need 
$\frac{\twonorm{X_j}}{\sqrt{n} } \leq 1 + \theta, \forall j$ 
to hold in $\F(\theta)$.
We note that constants in the theorems are not optimized.

\begin{theorem}{\textnormal{(\bf{Estimation for the Lasso})}}
\label{thm:lasso}
Set $1 \leq n \leq p$, $0< \theta < 1$, and $a > 0$.
Let $s < p/2$.
Consider the linear model in (\ref{eq::linear-model}) with random design
$X := \Psi \Sigma^{1/2}$, where $\Psi_{n \times p}$ is a subgaussian 
random matrix as defined in Theorem~\ref{thm:subgaussian-T}. 
and $\Sigma$ satisfies~\eqref{eq::admissible-random} 
and~\eqref{eq::eigen-Sigma}.
Let $\hat\beta$ be an optimal solution to the Lasso as 
in~\eqref{eq::origin} with $\lambda_n \geq 2 (1 + \theta) \basepen$.
Suppose that  $n$ satisfies for $\bar{C}$ as in~\eqref{eq::C-define},
\begin{eqnarray}
\label{eq::sparsity-condition-random}
n >  \frac{c' \alpha^4}{\theta^2} 
\max\left(\bar{C}^2 s \log (5ep/s), 9 \log p\right).
\end{eqnarray}
Then with probability at least 
$\prob{\X \cap \T_a} 
\geq 1- 2\exp(-\bar{c} \theta^2 n/\alpha^4) - \prob{\T_a^c}$,
we have for  $B \leq 4 K^2(s, 3, \Sigma)/(1-\theta)^2$ and $k_0 = 3$,
\ben
\label{eq::2-loss}
\twonorm{\hat{\beta} -\beta} \leq 2 B  \lambda_n \sqrt{s}, \; \;
\text{ and } \; \norm{\hat{\beta} -\beta}_1 \leq B \lambda_n s.
\een
\end{theorem}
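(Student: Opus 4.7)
The plan is to condition on the high-probability event $\X \cap \T_a$ and then treat this as a purely deterministic Lasso analysis, where $X$ satisfies both the column normalization bound and $RE(s,3,X)$ with $K(s,3,X) \le K(s,3,\Sigma)/(1-\theta)$, and the noise is controlled by $\|X^T\e/n\|_\infty \le (1+\theta)\basepen \le \lambda_n/2$. The probability bound in the theorem is then immediate from Theorem~\ref{thm:subgaussian-T} and the definition of $\T_a$ (cf.\ the discussion around \eqref{eq::X-bound} and \eqref{eq::prob-T}), combined with a union bound.

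Within this event, I would start from the optimality of $\hat\beta$: the inequality $\frac{1}{2n}\twonorm{Y-X\hat\beta}^2 + \lambda_n\|\hat\beta\|_1 \le \frac{1}{2n}\twonorm{Y-X\beta}^2 + \lambda_n\|\beta\|_1$. Substituting $Y = X\beta + \e$ and setting $\upsilon = \hat\beta-\beta$, this rearranges to
\begin{equation*}
\frac{1}{2n}\twonorm{X\upsilon}^2 \;\le\; \tfrac{1}{n}\langle X^T\e,\upsilon\rangle + \lambda_n(\|\beta\|_1-\|\hat\beta\|_1).
\end{equation*}
Bounding $|\langle X^T\e/n,\upsilon\rangle|\le \|X^T\e/n\|_\infty\|\upsilon\|_1\le \tfrac{\lambda_n}{2}\|\upsilon\|_1$ using $\T_a$, and using $\|\beta\|_1-\|\hat\beta\|_1 \le \|\upsilon_I\|_1-\|\upsilon_{I^c}\|_1$ with $I=\supp(\beta)$, yields the sharpened basic inequality
\begin{equation*}
\tfrac{1}{2n}\twonorm{X\upsilon}^2 + \tfrac{\lambda_n}{2}\|\upsilon_{I^c}\|_1 \;\le\; \tfrac{3\lambda_n}{2}\|\upsilon_I\|_1,
\end{equation*}
which immediately produces the cone inclusion $\|\upsilon_{I^c}\|_1 \le 3\|\upsilon_I\|_1$. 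Hence $\upsilon$ is admissible with $k_0 = 3$ in the sense of Definition~\ref{def:admit}, and the $RE(s,3,X)$ condition on $\X$ applies with $J_0 = I$, giving $\twonorm{X\upsilon}/\sqrt{n} \ge \twonorm{\upsilon_I}/K(s,3,X)$.

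Plugging the RE lower bound into the basic inequality and using $\|\upsilon_I\|_1\le\sqrt{s}\twonorm{\upsilon_I}$ reduces to a quadratic-in-$\twonorm{\upsilon_I}$ inequality that yields $\twonorm{\upsilon_I} \lesssim \lambda_n\sqrt{s}\,K^2(s,3,X)$. Feeding this back into $\|\upsilon\|_1 \le 4\|\upsilon_I\|_1 \le 4\sqrt{s}\twonorm{\upsilon_I}$ produces the $\ell_1$ bound. For the $\ell_2$ bound, I would use the standard tail-decay trick for vectors satisfying the cone condition: since $T_0$ collects the $s$ largest coordinates, every coordinate of $\upsilon_{T_0^c}$ is at most $\|\upsilon_{T_0}\|_1/s$, so $\twonorm{\upsilon_{T_0^c}}^2 \le \|\upsilon_{T_0^c}\|_\infty\|\upsilon_{T_0^c}\|_1 \le 3\twonorm{\upsilon_{T_0}}^2$, hence $\twonorm{\upsilon} \le (1+\sqrt{3})\twonorm{\upsilon_{T_0}}$. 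Since $\twonorm{\upsilon_{T_0}} \ge \twonorm{\upsilon_I}$, the RE-based bound controls $\twonorm{\upsilon}$ in terms of $\lambda_n\sqrt{s}\,K^2(s,3,X)$. Finally, substituting $K(s,3,X) \le K(s,3,\Sigma)/(1-\theta)$ from the definition of $\RE(\theta)$ in \eqref{eq::good-random-design-RE} converts everything into the advertised expressions involving $B \le 4K^2(s,3,\Sigma)/(1-\theta)^2$.

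The conceptual steps are all standard; the actual obstacle is bookkeeping the numerical constants so that the $\ell_1$ bound collapses to $B\lambda_n s$ and the $\ell_2$ bound to $2B\lambda_n\sqrt{s}$ with the same $B$. In particular, one must carefully use the tighter inequality $\twonorm{\upsilon_I} \le \tfrac{3}{2}\lambda_n\sqrt{s}\,K^2(s,3,X)$ coming from completing the square in the quadratic step (rather than the cruder factor of $3$) and then be attentive to how the two tail bounds $\twonorm{\upsilon_{T_0^c}}\le\sqrt{3}\twonorm{\upsilon_{T_0}}$ and $\|\upsilon_{I^c}\|_1\le 3\|\upsilon_I\|_1$ interact, to land on the constant $4$ promised in the statement.
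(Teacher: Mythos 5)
Your overall architecture is the same as the paper's: condition on $\X \cap \T_a$, derive the basic inequality and the cone constraint $\norm{\upsilon_{S^c}}_1 \le 3\norm{\upsilon_S}_1$ (Lemma~\ref{lemma:magic-number}), invoke $RE(s,3,X)$, and finally convert $K(s,3,X)$ into $K(s,3,\Sigma)/(1-\theta)$ on the event $\RE(\theta)$. However, two of your steps do not go through as written. The first is the $\ell_2$ bound: you correctly reduce to controlling $\twonorm{\upsilon_{T_0}}$ via the tail-decay estimate, but then justify that control by ``since $\twonorm{\upsilon_{T_0}} \ge \twonorm{\upsilon_I}$'' together with your bound on $\twonorm{\upsilon_I}$. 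That inequality points the wrong way: an upper bound on the smaller quantity $\twonorm{\upsilon_I}$ gives no upper bound on $\twonorm{\upsilon_{T_0}}$. The missing idea is the \emph{universality} of the RE condition over all $|J_0|\le s$: by Proposition~\ref{prop:magic-cone} the cone constraint also holds at $T_0$, so you may apply $RE(s,3,X)$ with $J_0 = T_0$ to get $\twonorm{\upsilon_{T_0}} \le K(s,3,X)\twonorm{X\upsilon}/\sqrt{n}$ directly, and then bound $\twonorm{X\upsilon}/\sqrt{n} \le 4K(s,3,X)\lambda_n\sqrt{s}$ from the basic inequality. This is precisely the step the paper isolates at \eqref{eq::univers}.

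The second problem is the $\ell_1$ constant. Your route $\norm{\upsilon}_1 \le 4\norm{\upsilon_I}_1 \le 4\sqrt{s}\twonorm{\upsilon_I}$, fed with the quadratic bound on $\twonorm{\upsilon_I}$, does not reach $B\lambda_n s$ with $B = 4K^2$: the quadratic step gives $\twonorm{\upsilon_I} \le 3K^2\lambda_n\sqrt{s}$ (I do not see how completing the square improves this to $\tfrac{3}{2}K^2\lambda_n\sqrt{s}$), so the chain yields $12K^2\lambda_n s$; even granting your $\tfrac{3}{2}$ it yields $6K^2\lambda_n s$. The paper gets $4K^2\lambda_n s$ by a different bookkeeping: keep $\lambda_n\norm{\upsilon}_1$ on the left-hand side of the basic inequality, bound the right-hand side by $4\lambda_n\sqrt{s}\,K\,\twonorm{X\upsilon}/\sqrt{n} \le 4K^2\lambda_n^2 s + \twonorm{X\upsilon}^2/n$, and cancel the $\twonorm{X\upsilon}^2/n$ terms to read off $\norm{\upsilon}_1 \le 4K^2\lambda_n s$ in one stroke (see \eqref{eq::last}--\eqref{eq::last-3}). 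Relatedly, in the $\ell_2$ assembly you should use the orthogonal decomposition $\twonorm{\upsilon}^2 = \twonorm{\upsilon_{T_0}}^2 + \twonorm{\upsilon_{T_0^c}}^2 \le 4\twonorm{\upsilon_{T_0}}^2$ rather than the triangle-inequality factor $1+\sqrt{3}$, since $(1+\sqrt{3})\cdot 4K^2 > 8K^2 = 2B$. Both gaps are repairable within your framework, but the repairs are exactly the paper's argument.
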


\begin{theorem}{\textnormal{(\bf{Estimation for the Dantzig selector})}}
\label{thm:DS}
Set $1 \leq n \leq p$, $0< \theta < 1$, and $a > 0$. Let $s < p/2$.
Consider the linear model in (\ref{eq::linear-model}) with random design
$X := \Psi \Sigma^{1/2}$, where $\Psi_{n \times p}$ is a subgaussian 
random matrix as defined in Theorem~\ref{thm:subgaussian-T}. 
and $\Sigma$ satisfies~\eqref{eq::admissible-random} 
and~\eqref{eq::eigen-Sigma}.
Let $\hat\beta$ be an optimal solution to
the Dantzig selector as in~\eqref{eq::DS-func} where
$\lambda_n \geq (1+ \theta) \basepen$.
Suppose that  $n$ satisfies for $\bar{C}$ as in~\eqref{eq::C-define},
\begin{eqnarray}
\label{eq::sparsity-condition-random}
n >  \frac{c' \alpha^4}{\theta^2} 
\max\left(\bar{C}^2 s \log (5ep/s), 9 \log p\right).
\end{eqnarray}
then with probability at least 
$\prob{\X \cap \T_a} 
\geq 1- 2\exp(-\bar{c} \theta^2 n/\alpha^4) - \prob{\T_a^c}$,
we have for $B \leq 4 K^2(s, 1, \Sigma)/(1-\theta)^2$ and 
 $k_0 = 1$,
\ben
\label{eq::2-loss}
\twonorm{\hat{\beta} -\beta} \leq 3 B \lambda_n \sqrt{s},
\; \text{and } \; \norm{\hat{\beta} -\beta}_1 \leq 2 B \lambda_n s.
\een
\end{theorem}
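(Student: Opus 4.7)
The plan is to work conditionally on the high-probability event $\X \cap \T_a$, on which both the RE condition for $X$ (event $\RE(\theta)$, giving $K(s,1,X) \le K(s,1,\Sigma)/(1-\theta)$) and the noise control $\|X^{T}\epsilon/n\|_\infty \le (1+\theta)\basepen \le \lambda_n$ hold. The first observation is then that the true parameter $\beta$ is Dantzig-feasible, since $\frac{1}{n}X^T(Y-X\beta) = \frac{1}{n}X^T\epsilon$. Optimality of $\hat\beta$ therefore gives $\|\hat\beta\|_1 \le \|\beta\|_1$. Writing $\upsilon := \hat\beta-\beta$ and $I := \supp(\beta)$ with $|I|\le s$, and splitting $\|\hat\beta\|_1 = \|\beta_I+\upsilon_I\|_1 + \|\upsilon_{I^c}\|_1$, the triangle inequality produces the $k_0=1$ cone constraint $\|\upsilon_{I^c}\|_1 \le \|\upsilon_I\|_1$. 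Since $\upsilon$ is admissible to \eqref{eq::admissible} with $J_0=I$, Proposition~\ref{prop:magic-cone} transfers the cone to the top-$s$ coordinate set, giving $\|\upsilon_{T_0^c}\|_1 \le \|\upsilon_{T_0}\|_1$.

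The next step is to obtain an $\ell_\infty$ bound on the empirical Gram residual. Using $X\upsilon = X\hat\beta - X\beta = (X\hat\beta-Y)+\epsilon$ and the triangle inequality,
\[
\Big\|\tfrac{1}{n}X^T X \upsilon\Big\|_\infty \;\le\; \Big\|\tfrac{1}{n}X^T(X\hat\beta-Y)\Big\|_\infty + \Big\|\tfrac{1}{n}X^T \epsilon\Big\|_\infty \;\le\; 2\lambda_n,
\]
where the first term is controlled by the Dantzig constraint for $\hat\beta$ and the second by $\T_a$. Combining Cauchy--Schwarz in its $\ell_1/\ell_\infty$ form with the RE lower bound gives
\[
\frac{\|\upsilon_{T_0}\|_2^2}{K^2(s,1,X)} \;\le\; \frac{\|X\upsilon\|_2^2}{n} \;=\; \Big\langle \upsilon,\tfrac{1}{n}X^TX\upsilon\Big\rangle \;\le\; 2\lambda_n\|\upsilon\|_1,
\]
and the cone condition yields $\|\upsilon\|_1 \le 2\|\upsilon_{T_0}\|_1 \le 2\sqrt{s}\,\|\upsilon_{T_0}\|_2$. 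Dividing through and using $K^2(s,1,X)\le K^2(s,1,\Sigma)/(1-\theta)^2\le B/4$ produces $\|\upsilon_{T_0}\|_2 \le B\lambda_n\sqrt{s}$, which immediately delivers $\|\upsilon\|_1 \le 2B\lambda_n s$.

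For the $\ell_2$ bound it remains to control $\|\upsilon_{T_0^c}\|_2$. Here I would carry out the standard block-peeling argument: partition $T_0^c$ into consecutive blocks $T_1,T_2,\dots$ of size $s$ in decreasing order of $|\upsilon_i|$, so that each entry in $T_j$ is at most $\|\upsilon_{T_{j-1}}\|_1/s$ and hence $\|\upsilon_{T_j}\|_2 \le \|\upsilon_{T_{j-1}}\|_1/\sqrt{s}$; summing and using the cone condition gives $\|\upsilon_{T_0^c}\|_2 \le \|\upsilon\|_1/\sqrt{s} \le 2\|\upsilon_{T_0}\|_2$, so $\|\upsilon\|_2 \le 3\|\upsilon_{T_0}\|_2 \le 3B\lambda_n\sqrt{s}$. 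Finally the probability estimate $\prob{\X\cap \T_a}\ge 1-2\exp(-\bar c\theta^2 n/\alpha^4)-\prob{\T_a^c}$ follows from Theorem~\ref{thm:subgaussian-T} for $\X$ and \eqref{eq::prob-T} for $\T_a$, under the sample-size lower bound \eqref{eq::sparsity-condition-random}. The main technical burden is really concentrated in the ingredients already established (controlling $K(s,1,X)$ via Theorem~\ref{thm:subgaussian-T} and the noise via $\T_a$); the only step requiring care inside the present argument is the block-peeling estimate producing the factor $3$, since everything else is a direct application of Cauchy--Schwarz and the cone constraint.
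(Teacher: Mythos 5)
Your proposal is correct and follows essentially the same route as the paper: feasibility of $\beta$ plus optimality of $\hat\beta$ gives the $k_0=1$ cone constraint (the paper's Lemma~\ref{lemma:magic-number-DS}), the bound $\|\tfrac{1}{n}X^TX\upsilon\|_\infty\le 2\lambda_n$ combined with H\"older and $RE(s,1,X)$ gives the $\ell_2$ bound on the top-$s$ block and hence the $\ell_1$ bound, and the block-peeling estimate $\|\upsilon_{T_0^c}\|_2\le s^{-1/2}\|\upsilon\|_1$ yields the factor $3$ (this is exactly the paper's inequality \eqref{eq::cone-two-norm-2}, derived by the same peeling in Section~\ref{sec:decomp-vec}, which the paper's Proposition~\ref{prop:initial-bound-DS} uses in the equivalent form $\|\upsilon\|_2\le\|\upsilon_{T_0}\|_2+s^{-1/2}\|\upsilon\|_1$). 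Your substitution $K(s,1,X)\le K(s,1,\Sigma)/(1-\theta)$ on the event $\RE(\theta)$ and the union-bound accounting for $\prob{\X\cap\T_a}$ match the paper's treatment.
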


Proofs are given in Section~\ref{sec:recovery-proof}. 

\noindent{\bf Acknowledgments.}
Research is supported by the Swiss National Science Foundation (SNF)
Grant 20PA21-120050/1.
The author is extremely grateful to 
 Guillaume Lecu\'{e} for his careful reading of the manuscript and 
for his many constructive and insightful comments that  have 
lead to a significant improvement of the presentation of this paper.
The author would also like to thank Olivier Gu\'{e}don, 
Alain Pajor, and Larry Wasserman for helpful conversations,
and Roman Vershynin for providing a reference.
\appendix

\section{Some preliminary propositions}
\label{sec:append-pre}
In this section, we first prove Proposition~\ref{prop:magic-cone}, 
in Section~\ref{sec:magic-cone},  which is used throughout the 
rest of the paper.
We then present a simple decomposition for vectors $\delta \in E_s$ and 
show some immediate implications, which we shall need in the proofs for 
Lemma~\ref{lemma:U-number}, Theorem~\ref{thm:lasso} and 
 Theorem~\ref{thm:DS}.

\subsection{Proof of Proposition~\ref{prop:magic-cone}}
\label{sec:magic-cone}
\begin{proofof2}
For each $\delta$ that is admissible to~\eqref{eq::admissible-random},
there exists a subset of indices
$J_0 \subseteq \{1, \ldots, p\}$ such that both $|J_0| \leq s$ and
$\norm{\delta _{J_0^c}}_1 \leq k_0 \norm{\delta_{J_0}}_1$ hold.
This immediately implies that~\eqref{eq::cone-max} holds for $k_0 > 0$,
\bens
\norm{\delta_{T_0^c}}_1 = \norm{\delta}_1 -  \norm{\delta_{T_0}}_1 
\leq \norm{\delta}_1 -  \norm{\delta_{J_0}}_1 = \norm{\delta_{J_0^c}}_1  
\leq k_0 \norm{\delta_{J_0}}_1 \leq k_0 \norm{\delta_{T_0}}_1
\eens
due to the maximality of $\norm{\delta_{T_0}}_1$ among all 
$\norm{\delta_{J_0}}_1$ for $J_0 \subseteq \{1, \ldots, p\}$ 
such that $|J_0| \leq s$.  This immediately implies that  
$E_s' \subset E_s$.

We now show that
~\eqref{eq::admissible-interpret-random} is a necessary and sufficient 
condition for~\eqref{eq::admissible-random} to hold;
the same argument applies to the RE conditions on $X$.
Suppose~\eqref{eq::admissible-interpret-random} hold
for $\delta \not= 0$; we have for all $J_0 \in \{1, \ldots, p \}$ 
such that $|J_0| \leq s$ 
and $\norm{\delta_{J_0^c}}_1 \leq k_0 \norm{\delta_{J_0}}_1$,
\ben
\label{eq::moreover}
\norm{\Sigma^{1/2} \delta}_2
\geq 
\frac{\norm{\delta_{T_0}}_2}{K(s, k_0, \Sigma)}
\geq 
\frac{\norm{\delta_{J_0}}_2}{K(s, k_0, \Sigma)} > 0,
\een
where the last inequality is due to the fact that
$\twonorm{\delta_{J_0}} > 0$; Suppose $\twonorm{\delta_{J_0}} = 0$ 
otherwise; then $\norm{\delta_{J_0^c}}_1 \leq k_0 \norm{\delta_{J_0}}_1 = 0$ 
would imply that $\delta = 0$, which is a contradiction.
Conversely, suppose that~\eqref{eq::admissible-random} hold;
then~\eqref{eq::admissible-interpret-random} must also hold, given that 
$T_0$ satisfies~\eqref{eq::cone-max} with $|T_0| = s$,
and $\delta_{T_0}  \not= 0$.

Finally, the ``moreover'' part holds given Assumption~\ref{def:memory},
in view of~\eqref{eq::moreover}.
\end{proofof2}

\subsection{Decomposing a vector in $E_s$}
\label{sec:decomp-vec}
For each $\delta \in E_s$, we decompose $\delta$ into a set of vectors
$\delta_{T_0}$, $\delta_{T_1}$,  $\delta_{T_2}$, \ldots, $\delta_{T_K}$ 
such that $T_0$ corresponds to locations of the $s$ largest 
coefficients of $\delta$ in absolute values,
$T_1$ corresponds to locations of the $s$ largest coefficients of 
$\delta_{T_0^c}$ in absolute values, $T_2$ corresponds to locations of the 
next $s$ largest coefficients of $\delta_{T_0^c}$ in absolute values, and so on. 
Hence we have  $T_0^c = \bigcup_{k=1}^K T_k$, where $K \geq 1, 
|T_k| = s, \forall k = 1, \ldots, K-1$, and $|T_K| \leq s$. 
Now for each $j \geq 1$, we have 
$$\twonorm{\delta_{T_j}} \leq \sqrt{s} \norm{\delta_{T_j}}_{\infty} \leq 
\inv{\sqrt{s}} \norm{\delta_{T_{j-1}}}_1,$$
where  vector $\norm{\cdot}_{\infty}$ represents the largest entry in
absolute value in the vector, and hence
\ben
\nonumber
\sum_{k \geq 1} \twonorm{\delta_{T_{k}}} & \leq &
s^{-1/2} (\norm{\delta_{T_0}}_1 + \norm{\delta_{T_1}}_1 + 
\norm{\delta_{T_2}}_1 + \ldots) \\
 \label{eq::nowhere-bound-2}
& \leq & 
s^{-1/2} (\norm{\delta_{T_0}}_1 + \norm{\delta_{T_0^c}}_1) 
\; =  \;
s^{-1/2}  \norm{\delta}_1 
\\
\label{eq::nowhere-bound}
& \leq & s^{-1/2} (k_0 + 1) \norm{\delta_{T_0}}_1
\leq (k_0 + 1) \twonorm{\delta_{T_0}},
\een
where for~\eqref{eq::nowhere-bound}, we have used the fact
that for all $\delta \in E_s$
\ben
\label{eq::free-cone-max}
\norm{\delta_{T_0^c}}_1 & \leq & k_0 \norm{\delta_{T_0}}_1
\een
holds. Indeed, for $\delta$ such that~\eqref{eq::free-cone-max} holds, 
we have by~\eqref{eq::nowhere-bound-2} and~\eqref{eq::nowhere-bound}
\ben
\label{eq::cone-two-norm-2}
\twonorm{\delta} \; \leq \; 
\twonorm{\delta_{T_0}} + 
\sum_{j \geq 1} \twonorm{\delta_{T_j}}  
& \leq &
\twonorm{\delta_{T_0}} + s^{-1/2}  \norm{\delta}_1   \\
\label{eq::cone-two-norm}
& \leq & (k_0 + 2) \twonorm{\delta_{T_0}}.
\een

\subsection{On the equivalence of two RE conditions}
\label{sec:two-RE}
To introduce the second RE assumption by~\cite{BRT08}, 
we need some more notation.
For an integer $s$ such that $1 \leq s \leq p/2$, a vector 
$\upsilon \in \R^p$ and a set of indices $J_0 \subseteq \{1, \ldots, p\}$
with $|J_0| \leq s$, denoted by $J_1$ the subset of $\{1, \ldots, p\}$
corresponding to the $s$ largest in absolute value coordinates of $\upsilon$
outside of $J_0$ and defined $J_{01} \stackrel{\triangle}{=} J_0 \cup J_1$.
\begin{assumption}\textnormal{{\bf Restricted eigenvalue assumption} 
$RE(s, s, k_0, X)$~\citep{BRT08}}. 
\label{def:BRT-cond-two}
Consider a fixed design. For some integer $1\leq s \leq p/2$, and
a positive number $k_0$, the following condition holds:
\beq
\label{eq::admissible-two}
\inv{K(s, s, k_0, X)} := 
\min_{\stackrel{J_0 \subseteq \{1, \ldots, p\},}{|J_0| \leq s}}
\min_{\stackrel{\upsilon \not=0,}
{\norm{\upsilon_{J_0^c}}_1 \leq k_0 \norm{\upsilon_{J_0}}_1}}
\; \;  \frac{\norm{X \upsilon}_2}{\sqrt{n}\norm{\upsilon_{J_{01}}}_2} > 0.
\eeq
\end{assumption}

\begin{assumption}
\textnormal{\bf{Restricted eigenvalue assumption} $RE(s, s, k_0, \Sigma)$}
\label{def:BRT-cond-random}
For some integer $1\leq s \leq p/2$, and
a positive number $k_0$, the following condition holds:
\beq
\label{eq::admissible-random-duo}
\inv{K(s, s, k_0, \Sigma)} := \min_{\stackrel{J_0 \subseteq \{1, \ldots,
    p\},}{|J_0| \leq s}} 
\min_{\stackrel{\upsilon \not=0,}
{\norm{\upsilon_{J_0^c}}_1 \leq k_0 \norm{\upsilon_{J_0}}_1}}
\; \;  \frac{\norm{\Sigma^{1/2} \upsilon}_2}{\norm{\upsilon_{J_{01}}}_2} > 0.
\eeq
\end{assumption}
\begin{proposition}
\label{prop:two-RE-same}
For some integer $1\leq s \leq p/2$, and for the same $k_0 >0$, 
the two sets of RE conditions are equivalent up to a constant 
$\sqrt{2}$ factor of each other:
$$\frac{K(s, s, k_0, \Sigma)}{\sqrt{2}} \leq K(s, k_0, \Sigma) 
\leq K(s, s, k_0, \Sigma);$$
Similarly, we have
$$\frac{K(s, s, k_0, X)}{\sqrt{2}} \leq K(s, k_0, X) 
\leq K(s, s, k_0, X).$$
\end{proposition}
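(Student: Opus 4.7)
The plan is to rewrite both $1/K(s,k_0,\Sigma)$ and $1/K(s,s,k_0,\Sigma)$ as minimizations over the single variable $\upsilon$, by showing that the optimal choice of the index set in the inner minimization is always $J_0=T_0$, the top-$s$ support of $|\upsilon|$. One half is immediate: the admissibility set for $(J_0,\upsilon)$ is the same in both definitions, and $J_0\subseteq J_{01}$ gives $\twonorm{\upsilon_{J_0}}\le\twonorm{\upsilon_{J_{01}}}$, so the ratio defining $1/K(s,k_0,\Sigma)$ pointwise dominates that defining $1/K(s,s,k_0,\Sigma)$; taking minima yields $K(s,k_0,\Sigma)\le K(s,s,k_0,\Sigma)$, and the identical argument with $X\upsilon/\sqrt{n}$ in place of $\Sigma^{1/2}\upsilon$ gives $K(s,k_0,X)\le K(s,s,k_0,X)$.

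For the opposite direction, the key observation is that, for any fixed $\upsilon\neq 0$ that is admissible to some $J_0$ with $|J_0|\le s$, the inner minimum over $J_0$ in each of the two problems is realized by the canonical choice $J_0=T_0$. Minimizing the ratio amounts to maximizing the denominator: since $T_0$ is the top-$s$ support of $|\upsilon|$, we have $\twonorm{\upsilon_{J_0}}\le\twonorm{\upsilon_{T_0}}$ for every $J_0$ with $|J_0|\le s$; and since $|J_{01}|\le 2s$, we have $\twonorm{\upsilon_{J_{01}}}\le\twonorm{\upsilon_{T_0\cup T_1}}$, with equality at $J_0=T_0$ (because then $J_1=T_1$). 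Proposition~\ref{prop:magic-cone} ensures that $\upsilon$ is admissible with respect to $T_0$ as soon as it is admissible to any $J_0$, so $J_0=T_0$ is always a feasible choice. Consequently,
\begin{eqnarray*}
\frac{1}{K(s,k_0,\Sigma)} &=& \min_{\upsilon}\;\frac{\twonorm{\Sigma^{1/2}\upsilon}}{\twonorm{\upsilon_{T_0}}}, \\
\frac{1}{K(s,s,k_0,\Sigma)} &=& \min_{\upsilon}\;\frac{\twonorm{\Sigma^{1/2}\upsilon}}{\twonorm{\upsilon_{T_0\cup T_1}}},
\end{eqnarray*}
both minima ranging over $\upsilon\neq 0$ satisfying $\norm{\upsilon_{T_0^c}}_1\le k_0\norm{\upsilon_{T_0}}_1$.

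The proof is then completed by the pointwise bound $\twonorm{\upsilon_{T_0\cup T_1}}\le\sqrt{2}\,\twonorm{\upsilon_{T_0}}$. This is a one-liner from the decomposition section: every coordinate of $\upsilon_{T_1}$ is bounded in absolute value by every coordinate of $\upsilon_{T_0}$, and $|T_1|\le|T_0|=s$, so $\twonorm{\upsilon_{T_1}}^2\le\twonorm{\upsilon_{T_0}}^2$, whence $\twonorm{\upsilon_{T_0\cup T_1}}^2=\twonorm{\upsilon_{T_0}}^2+\twonorm{\upsilon_{T_1}}^2\le 2\twonorm{\upsilon_{T_0}}^2$. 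Dividing the two displayed equalities gives $1/K(s,s,k_0,\Sigma)\ge(1/\sqrt 2)\cdot 1/K(s,k_0,\Sigma)$, i.e., $K(s,s,k_0,\Sigma)\le\sqrt{2}\,K(s,k_0,\Sigma)$. The same chain of reasoning, applied with $X/\sqrt{n}$ replacing $\Sigma^{1/2}$, yields the claimed inequality for $X$.

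The only mildly subtle ingredient is the reduction to the canonical set $J_0=T_0$, which rests on Proposition~\ref{prop:magic-cone}; after that the $\sqrt{2}$ factor drops out of the trivial dominance $\twonorm{\upsilon_{T_1}}\le\twonorm{\upsilon_{T_0}}$.
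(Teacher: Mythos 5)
Your proof is correct and follows essentially the same route as the paper's: the easy direction via $J_0\subseteq J_{01}$, and the $\sqrt{2}$ direction via the observation that the second block of $s$ coordinates is dominated in $\ell_2$ norm by the top block, so that $\twonorm{\upsilon_{J_{01}}}^2\leq 2\,\twonorm{\upsilon_{T_0}}^2$ up to one application of the RE bound. The only cosmetic difference is that you first collapse the minimization over $J_0$ to the canonical choice $J_0=T_0$ (justified by Proposition~\ref{prop:magic-cone}), whereas the paper keeps a general feasible $J_0$ and bounds $\twonorm{\upsilon_{J_0}}$ and $\twonorm{\upsilon_{J_1}}\leq\twonorm{\upsilon_{T_0}}$ separately by $K(s,k_0,\Sigma)\twonorm{\Sigma^{1/2}\upsilon}$; the constant arises identically in both arguments.
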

\begin{proof}
It is obvious that for the same $k_0 >0$, 
~\eqref{eq::admissible-random-duo} implies that the condition as 
in Definition~\ref{def:memory} holds with 
$$K(s, k_0, \Sigma) \leq K(s, s, k_0, \Sigma).$$
Now, for the other direction, suppose that  $RE(s, k_0, \Sigma)$ holds
for  for $K(s, k_0, \Sigma) >0$. Then for 
all $\upsilon \not = 0$ that is 
admissible to~\eqref{eq::admissible-random}, we have by 
Proposition~\ref{prop:magic-cone},
\ben
\twonorm{\Sigma^{1/2} \upsilon} & \geq & 
\frac{\norm{\upsilon_{T_0}}_2}{K(s, k_0, \Sigma)} \; > \; 0,
\een
where $T_0$ corresponds to locations of the $s$ largest coefficients of 
$\upsilon$ in absolute values; Now for any
$J_0 \subseteq \{1, \ldots, p\}$ such that $|J_0| \leq s$,
and $\norm{\upsilon_{J_0^c}}_1 \leq k_0 \norm{\upsilon_{J_0}}_1$ holds,
we have by\eqref{eq::admissible-random},
\ben
\label{eq::prop-continue}
\twonorm{\Sigma^{1/2} \upsilon} & \geq & 
\frac{\norm{\upsilon_{J_0}}_2}{K(s, k_0, \Sigma)} \; > \; 0.
\een
Now it is clear that $J_1 \subset T_0 \cup T_1$, and we have
for all $\upsilon \not = 0$ that is 
admissible to~\eqref{eq::admissible-random},
\ben
0\; < \; \twonorm{\upsilon_{J_{01}}}^2
& = & \twonorm{\upsilon_{J_{0}}}^2 + 
\twonorm{\upsilon_{J_{1}}}^2 \\ 
& \leq & \twonorm{\upsilon_{J_{0}}}^2 + 
\twonorm{\upsilon_{T_{0}}}^2 \\ 
& \leq & 
2 K^2(s, k_0, \Sigma) \twonorm{\Sigma^{1/2} \upsilon}^2,
\een
which immediately implies that for all $\upsilon \not = 0$ that is 
admissible to~\eqref{eq::admissible-random},
$$\frac{\twonorm{\Sigma^{1/2} \upsilon}}{
 \twonorm{\upsilon_{J_{01}}}} \geq \inv{\sqrt{2} K(s, k_0, \Sigma)} > 0.$$
Thus we have that $RE(s, s, k_0, \Sigma)$ condition holds with
$K(s, s, k_0, \Sigma) \leq \sqrt{2} K(s, k_0, \Sigma).$
The other set of inequalities follow exactly the same line of arguments.
\end{proof}

We now introduce the last assumption, for which we need some more notation.
For integers $s, m$ such that $1 \leq s \leq p/2$ and $m \geq s, s + m \leq p$,
a vector $\delta \in \R^p$ and a set of indices 
$J_0 \subseteq \{1, \ldots, p\}$ with $|J_0| \leq s$, 
denoted by $J_m$ the subset of $\{1, \ldots, p\}$ corresponding to the $m$ 
largest in absolute value coordinates of $\delta$ outside of $J_0$ and 
defined $J_{0m} \stackrel{\triangle}{=} J_0 \cup J_m$.
\begin{assumption}\textnormal{{\bf Restricted eigenvalue assumption} $RE(s,
    m, k_0, X)$~\citep{BRT08}}. 
\label{def:BRT-cond-final}
For some integer $1\leq s \leq p/2$, $m \geq s, s
+ m \leq p$, and 
a positive number $k_0$, the following condition holds:
\beq
\label{eq::admissible-final}
\inv{K(s, m, k_0, X)} := 
\min_{\stackrel{J_0 \subseteq \{1, \ldots, p\},}{|J_0| \leq s}}
\min_{\stackrel{\upsilon \not=0,}
{\norm{\upsilon_{J_0^c}}_1 \leq k_0 \norm{\upsilon_{J_0}}_1}}
\; \;  \frac{\norm{X \upsilon}_2}{\sqrt{n}\norm{\upsilon_{J_{0m}}}_2} > 0.
\eeq
\end{assumption}

\begin{proposition}
\label{prop:two-RE-same-final}
For some integer $1\leq s \leq p/2$,
$m \geq s, s + m \leq p$, and some positive number $k_0$, we have
$$\frac{K(s, m, k_0, X)}{\sqrt{2+ k_0^2}}
\leq K(s, k_0, X) \leq K(s, m, k_0, X).$$
\end{proposition}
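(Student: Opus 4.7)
The plan is to handle the two inequalities separately, following the strategy used in Proposition~\ref{prop:two-RE-same} but more carefully accounting for the $m \geq s$ extra coordinates in $J_{0m}$. The upper bound $K(s, k_0, X) \leq K(s, m, k_0, X)$ is immediate from the set inclusion $J_0 \subseteq J_{0m}$, which gives $\twonorm{\upsilon_{J_0}} \leq \twonorm{\upsilon_{J_{0m}}}$ and hence a pointwise domination of the ratio in~\eqref{eq::admissible} over the ratio in~\eqref{eq::admissible-final}; taking minima over the same admissible pairs $(\upsilon, J_0)$ yields the claim.

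For the lower bound, I would invoke Proposition~\ref{prop:magic-cone} (in its $X$-version) to transfer the cone condition from any valid $J_0$ to the support $T_0$ of the $s$ largest coordinates of $\upsilon$ in absolute value, and to pick up the equivalent form $\twonorm{X\upsilon}/\sqrt{n} \geq \twonorm{\upsilon_{T_0}}/K(s, k_0, X)$. It then suffices to establish the purely deterministic norm comparison $\twonorm{\upsilon_{J_{0m}}}^2 \leq (2+k_0^2)\,\twonorm{\upsilon_{T_0}}^2$, since combining this with the displayed inequality and minimizing over admissible data gives $K(s, m, k_0, X) \leq \sqrt{2+k_0^2}\,K(s, k_0, X)$.

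The crux of the norm comparison is the set-containment $T_0 \subseteq J_{0m}$: every coordinate of $T_0 \setminus J_0 \subseteq J_0^c$ has absolute value at least as large as any coordinate of $J_0^c \setminus T_0$ (since $T_0$ holds the top $s$ entries overall), and $|T_0 \setminus J_0| \leq s \leq m$, so these coordinates are forced into the top-$m$ set $J_m$. Granting this, one splits $\twonorm{\upsilon_{J_{0m}}}^2 = \twonorm{\upsilon_{T_0}}^2 + \twonorm{\upsilon_{J_{0m} \setminus T_0}}^2$ and bounds the tail by the elementary $\twonorm{\upsilon_{J_{0m} \setminus T_0}}^2 \leq \norm{\upsilon_{T_0^c}}_\infty \norm{\upsilon_{T_0^c}}_1$, followed by the rearrangement estimate $\norm{\upsilon_{T_0^c}}_\infty \leq \norm{\upsilon_{T_0}}_1/s$ (the smallest of the $s$ largest entries dominates everything in $T_0^c$), the cone bound $\norm{\upsilon_{T_0^c}}_1 \leq k_0 \norm{\upsilon_{T_0}}_1$ from Proposition~\ref{prop:magic-cone}, and the Cauchy--Schwarz bound $\norm{\upsilon_{T_0}}_1 \leq \sqrt{s}\twonorm{\upsilon_{T_0}}$. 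This produces a tail estimate of $k_0 \twonorm{\upsilon_{T_0}}^2$ and a combined bound of $(1+k_0)\twonorm{\upsilon_{T_0}}^2$, which is dominated by $(2+k_0^2)\twonorm{\upsilon_{T_0}}^2$ because $k_0^2 - k_0 + 1 \geq 0$ for all real $k_0$.

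The main obstacle I anticipate is only the set-containment $T_0 \subseteq J_{0m}$, which is nontrivial when $J_0 \neq T_0$ and must be argued carefully from the definition of $J_m$ as the top-$m$ subset of $\upsilon_{J_0^c}$; the remainder of the computation is a direct extension of the argument in Proposition~\ref{prop:two-RE-same}, with a tail block of size $m$ in place of the single $J_1$-block of size $s$.
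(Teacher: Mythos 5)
Your proof is correct; the upper bound is handled exactly as in the paper, but your argument for the lower bound uses a genuinely different decomposition for the key norm comparison. The paper splits $J_{0m}$ as $J_0 \cup J_1 \cup (J_m\setminus J_1)$ and controls the last block by the Cand\`es--Tao ordered-tail estimate $\size{\upsilon_{J_0^c}}_{(k)} \le \norm{\upsilon_{J_0^c}}_1/k$ together with $\sum_{k\ge s+1}k^{-2}\le s^{-1}$, arriving at $\twonorm{\upsilon_{J_{0m}}}^2 \le (1+k_0^2)\twonorm{\upsilon_{J_0}}^2 + \twonorm{\upsilon_{T_0}}^2$ and then applying the RE bound separately to $\twonorm{\upsilon_{J_0}}$ and $\twonorm{\upsilon_{T_0}}$. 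You instead establish the containment $T_0\subseteq J_{0m}$ --- the one point that genuinely needs care, and which you correctly reduce to the observation that the at most $s\le m$ coordinates of $T_0\setminus J_0$ dominate every coordinate of $J_0^c\cap T_0^c$ and are therefore forced into $J_m$ --- then split off $\twonorm{\upsilon_{T_0}}^2$ and bound the remainder by the H\"older-type chain $\twonorm{\upsilon_{J_{0m}\setminus T_0}}^2 \le \norm{\upsilon_{T_0^c}}_\infty\norm{\upsilon_{T_0^c}}_1 \le k_0\twonorm{\upsilon_{T_0}}^2$, so that only the single quantity $\twonorm{\upsilon_{T_0}}$ has to be fed into the equivalent form of the RE condition from Proposition~\ref{prop:magic-cone}. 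Your route is slightly cleaner and in fact yields the sharper constant $\sqrt{1+k_0}$ in place of $\sqrt{2+k_0^2}$, which you then deliberately weaken via $k_0^2-k_0+1\ge 0$ to match the statement; the paper's route has the advantage of reusing verbatim the tail computation of \cite{CT07}. The only loose end, which affects both arguments equally and is harmless, is tie-breaking in the definitions of $T_0$ and $J_m$: ties can only exchange coordinates of equal magnitude and so leave all the norms involved unchanged.
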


\begin{proof}
It is clear that $K(s, k_0, X) \leq K(s, m, k_0, X)$ for $m \geq s$.
Now suppose that  $RE(s, k_0, X)$ holds, we continue 
from~\eqref{eq::prop-continue}. 
We devide $J_m$ into $J_1, J_2, \ldots$, such that 
such that $J_1$ corresponds to locations of the $s$ largest coefficients of 
$\upsilon_{J_0^c}$ in absolute values, $J_2$ corresponds to locations of the 
next $s$ largest coefficients of $\upsilon_{J_0^c}$ in absolute values, 
and so on. We first bound 
$\twonorm{\upsilon_{J_{01}^c}}^2$, following essentially the same argument
as in~\cite{CT07}:
observe that the $k$th largest value of $\upsilon_{J_0^c}$ obeys
$$\size{\upsilon_{J_{0}^c}}_{(k)} \leq  \norm{\upsilon_{J_0^c}}_1 / k;$$
Thus we have for $\delta$ that is admissible to~\eqref{eq::admissible-final},
\bens
\twonorm{\upsilon_{J_{01}^c}}^2 
& \leq & \norm{\upsilon_{J_0^c}}_1^2  \sum_{j \geq s +1} 1/k^2 
\leq s^{-1} \norm{\upsilon_{J_0^c}}_1^2  \\
& \leq & 
s^{-1} k_0^2 \norm{\upsilon_{J_0}}_1^2 
\leq k_0^2 \twonorm{\upsilon_{J_{0}}}^2.
\eens
It is clear that $\twonorm{J_1} \leq  \twonorm{T_0}$, and
\bens
0\; < \; \twonorm{\upsilon_{J_{01}}}^2
 \leq \twonorm{\upsilon_{J_{0m}}}^2 
& \leq & \twonorm{\upsilon_{J_{0}}}^2 + 
\twonorm{\upsilon_{J_{1}}}^2 + 
\twonorm{\upsilon_{J_{01}^c}}^2 \\ 
& \leq & \twonorm{\upsilon_{J_{0}}}^2 + 
\twonorm{\upsilon_{J_{1}}}^2 + 
k_0^2 \twonorm{\upsilon_{J_{0}}}^2 \\
& \leq &
(1+ k_0^2) \twonorm{\upsilon_{J_{0}}}^2 + 
\twonorm{\upsilon_{T_{0}}}^2 \\ 
& \leq & 
(2 + k_0^2) K^2(s, k_0, X) \twonorm{X \upsilon}^2,
\eens
which immediately implies that for all $\upsilon \not = 0$ that is 
admissible to~\eqref{eq::admissible-final},
$$\frac{\twonorm{X \upsilon}}{
 \twonorm{\upsilon_{J_{0m}}}} \geq \inv{\sqrt{2 + k_0^2}
K(s, k_0, X)} > 0.$$
Thus we have that $RE(s, m, k_0, X)$ condition holds with
$K(s, m, k_0, X) \leq \sqrt{2 + k_0^2} K(s, k_0, X).$
\end{proof}

\section{Results on the complexity measures}
In this section, in preparation for proving 
Lemma~\ref{lemma:U-number} and Lemma~\ref{lemma:column-space},
we first state some well-known definitions and some preliminary results 
on certain complexity measures on a set $V$ (See~\cite{MPT08} for example); 
we also provide a new result in Lemma~\ref{lemma:tilde-ell-star}.
\begin{definition}
Given a subset $U \subset \R^p$ and a number $\ve > 0$,
an $\ve$-net $\Pi$ of $U$ with respect to the Euclidean metric 
is a subset of points of $U$ such that $\ve$-balls 
centered at $\Pi$ covers $U$:
$$U \subset \bigcup_{x \in \Pi} (x + \ve \Ball_2^p),$$
where $A + B := \{a + b: a \in A, b \in B \}$ is the Minkowski sum of 
the sets $A$ and $B$.
The covering number $\Net(U, \ve)$ is the smallest cardinality of an
$\ve$-net of $U$. 
\end{definition}
Now it is well-known that there exists an absolute constant $c_1>0$ such that 
for every finite subset $\Pi \subset \Ball_2^p$, 
\ben
\label{eq::ell*}
\ell_*(\conv \Pi) = \ell_*(\Pi) \leq  c_1 \sqrt{\log|\Pi|}.
\een
The main goal of the rest of this section is to provide a bound on
a variation of the complexity measure $\ell_*(V)$, 
which we will denote with $\tilde{\ell}_*(V)$ throughout this paper,
by essentially exploiting a bound similar to~\eqref{eq::ell*} 
(cf. Lemma~\ref{lemma:tilde-ell-star}).

Given a set $V \subset \R^p$, we need to also measure $\ell_*(W)$, 
where $W$ is the subspace of $\R^p$ such that the linear function 
$\Sigma^{1/2}: V \to \R^p$ carries $t \in V$ onto:
$$W := \Sigma^{1/2} (V) = \{w \in \R^p: 
w = \Sigma^{1/2} t \text{ for some } t \in V\}.$$
We denote this new measure with $\tilde{\ell}_*(V)$.
Formally,
\begin{definition}
\label{def::entropy-sigma}
For a subset $V \subset \R^p$, we define
\beq
\label{eq::entropy-sigma}
\tilde{\ell}_*(V) := {\ell}_*(\Sigma^{1/2}(V)) := 
\expct{\sup_{t \in V} \abs{\ip{t, \Sigma^{1/2} h}}}
:= \expct{\sup_{t \in V} \abs{\sum_{i=1}^p g_i t_i}}
\eeq
where $t = (t_i)^p_{i=1} \in \R^p$, and $h = (h_i)^p_{i=1} \in \R^p$ is a 
random vector with independent $N(0, 1)$ random variables while
$g = \Sigma^{1/2} h$ is a random vector with dependent Gaussian random
variables. 
\end{definition}

We prove a bound on this measure in Lemma~\ref{lemma:tilde-ell-star} 
after we present some existing results.
The subsets that we would like to apply~\eqref{eq::entropy} 
and~\eqref{eq::entropy-sigma} are the sets consisting of 
sparse vectors: 
let $S^{p-1}$ be the unit sphere in $\R^p$, for $1 \leq m \leq p$ 
\beq
U_m \; := \; \{ x \in S^{p-1}: |\supp(x)| \leq m \}
\eeq
We shall also consider the analogous subset of the Euclidean ball,
\beq
\tilde{U}_m \; := \; \{ x \in B^{p}_2: |\supp(x)| \leq m \}
\eeq
The sets $U_m$ and $\tilde{U}_m$ are unions of the unit spheres,
and unit balls, respectively, supported on $m$-dimensional coordinate
subspaces of $\R^p$. 
The following three lemmas are well-known and mostly standard; 
See~\cite{MPT08} and~\cite{LT91} for example.

\begin{lemma}\textnormal{(\citet[Lemma2.2]{MPT08})}
\label{eq::Pi-cover-numbers}
Given $m \geq 1$ and $\ve >0$. There exists an $\ve$ cover 
$\Pi \subset B_2^m$ of $B_2^m$ with respect to the Euclidean metric such
that $B_2^m \subset (1- \ve)^{-1} \conv \Pi$ and  $|\Pi| \leq (1+2/\ve)^m$.
Similarly, there exists an $\ve$ cover of the sphere 
$S^{m-1}$, $\Pi' \subset S^{m-1}$ such that $|\Pi'| \leq (1+2/\ve)^m$.
\end{lemma}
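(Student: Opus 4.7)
The plan is to combine a classical volumetric packing argument for the cardinality bound with a standard successive-approximation argument for the convex-hull containment $B_2^m \subset (1-\ve)^{-1}\conv \Pi$. Both ingredients are routine; the main work is in putting them together cleanly.

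For the cardinality, I would take $\Pi$ to be a maximal $\ve$-separated subset of $B_2^m$ in the Euclidean metric, and likewise $\Pi' \subset S^{m-1}$ a maximal $\ve$-separated subset of the sphere. Maximality automatically yields the $\ve$-net property: any point of $B_2^m$ (resp.\ $S^{m-1}$) must lie within $\ve$ of some element of $\Pi$ (resp.\ $\Pi'$), since otherwise it could be appended, contradicting maximality. The size bound follows by a volume comparison: the open balls of radius $\ve/2$ centered at points of $\Pi$ are pairwise disjoint (by $\ve$-separation) and are all contained in $(1 + \ve/2) B_2^m$. Equating volumes gives $|\Pi|(\ve/2)^m \leq (1+\ve/2)^m$, i.e.\ $|\Pi| \leq (1+2/\ve)^m$. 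The same argument applies to $\Pi'$ because the $\ve/2$-fattening $S^{m-1} + (\ve/2) B_2^m$ still sits inside $(1+\ve/2) B_2^m$.

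For the inclusion $B_2^m \subset (1-\ve)^{-1}\conv \Pi$ (meaningful when $\ve < 1$), I would iterate the $\ve$-net property. Given $x \in B_2^m$, pick $x_1 \in \Pi$ with $\norm{x - x_1} \leq \ve$, so that $y_1 := (x - x_1)/\ve$ again lies in $B_2^m$. Apply the net property to $y_1$ to find $x_2 \in \Pi$ with $\norm{y_1 - x_2} \leq \ve$, and continue. The resulting expansion is $x = \sum_{k \geq 1} \ve^{k-1} x_k$, and since $\sum_{k \geq 1} \ve^{k-1} = 1/(1-\ve)$, we can rewrite this as $x = (1-\ve)^{-1} \sum_{k \geq 1} (1-\ve)\ve^{k-1} x_k$, exhibiting $x$ as $(1-\ve)^{-1}$ times a convex combination of points in $\Pi$.

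No single step is genuinely hard; the only bookkeeping point is to verify that the rescaled residuals remain in $B_2^m$ at every stage of the iteration so the $\ve$-net can be reapplied, which follows immediately from $\norm{y_k} \leq 1$ at each step. The volumetric bound depends only on the ambient dimension $m$, which is crucial for the later applications to unions of $m$-dimensional coordinate subspaces arising in the cover of $U_m$ and $\tilde{U}_m$.
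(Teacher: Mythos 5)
Your proof is correct; the paper states this lemma only as a citation to \citet[Lemma 2.2]{MPT08} without reproducing a proof, and your argument --- a maximal $\ve$-separated subset with a volumetric comparison for the cardinality bound, plus the geometric-series iteration of the net property for the containment $B_2^m \subset (1-\ve)^{-1}\conv \Pi$ --- is exactly the standard argument behind the cited result. The only point worth making explicit is that the limiting infinite convex combination lies in $\conv \Pi$ because $\Pi$ is finite, so its convex hull is compact and in particular closed.
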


\begin{lemma}\textnormal{(\citet[Lemma 2.3]{MPT08})} 
\label{lemma:Pi-cover}
For every $0 < \ve \leq 1/2$ and every $1\leq m \leq p$, there is a 
set $\Pi \subset \Ball_2^p$ which is an $\ve$ cover of $\tilde{U}_m$,
such that 
\ben
\label{eq::cover-number} 
\tilde{U}_m  & \subset & 2 \conv \Pi, \; \; \text{ where } \; 
|\Pi| \; \leq \; \left(\frac{5}{2\ve}\right)^m {p \choose m}
\een
Moreover, there exists an $\ve$ cover $\Pi' \subset S^{p-1}$ of 
$U_m$ with cardinality at most $\left(\frac{5}{2\ve}\right)^m {p \choose m}$.
\end{lemma}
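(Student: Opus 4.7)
The plan is to exploit the decomposition of $\tilde{U}_m$ and $U_m$ as unions of coordinate slices, cover each slice using the one-subspace statement of Lemma~\ref{eq::Pi-cover-numbers}, and then take the union over the $\binom{p}{m}$ choices of support.

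First I would observe that each $x \in \tilde{U}_m$ has support contained in some $T \subset \{1,\ldots,p\}$ with $|T|=m$, so $\tilde{U}_m = \bigcup_{|T|=m}(\Ball_2^p \cap \R^T)$; after identifying $\R^T$ with $\R^m$, the slice $\Ball_2^p \cap \R^T$ is exactly the Euclidean ball $\Ball_2^m$. Lemma~\ref{eq::Pi-cover-numbers} supplies, for this $\Ball_2^m$, a set $\Pi_T \subset \Ball_2^m$ with $|\Pi_T| \leq (1+2/\ve)^m$ which is an $\ve$-net and satisfies $\Ball_2^m \subset (1-\ve)^{-1}\conv \Pi_T$. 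Setting $\Pi := \bigcup_{|T|=m}\Pi_T \subset \Ball_2^p$, any $x \in \tilde{U}_m$ lies within Euclidean distance $\ve$ of some point of $\Pi$ (take the cover of its own support slice), establishing the $\ve$-cover property.

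For the convex hull inclusion, if $x \in \tilde{U}_m$ has support inside $T$, then $x \in (1-\ve)^{-1}\conv \Pi_T \subset (1-\ve)^{-1}\conv \Pi$, and since $\ve \leq 1/2$ gives $(1-\ve)^{-1}\leq 2$, we conclude $\tilde{U}_m \subset 2\conv \Pi$. For the cardinality bound, $|\Pi| \leq \binom{p}{m}(1+2/\ve)^m$, and the elementary estimate $1+2/\ve = (2+\ve)/\ve \leq (5/2)/\ve$ (valid for $\ve \leq 1/2$) converts this into the advertised $\binom{p}{m}(5/(2\ve))^m$.

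Finally, the ``moreover'' claim for $U_m \subset S^{p-1}$ follows by exactly the same support-by-support argument, this time applying the sphere part of Lemma~\ref{eq::Pi-cover-numbers} to each coordinate sphere $S^{m-1} \subset \R^T$: one obtains $\Pi'_T \subset S^{m-1}$ of size at most $(1+2/\ve)^m$, and $\Pi' := \bigcup_{|T|=m}\Pi'_T \subset S^{p-1}$ is the required $\ve$-net of $U_m$ with the claimed cardinality. The main obstacle is cosmetic rather than conceptual: one has to verify that the $T$-slice of $\tilde{U}_m$ (or $U_m$) really is a full unit ball (sphere) in $\R^T$ so that Lemma~\ref{eq::Pi-cover-numbers} applies verbatim, and that the small constants absorb into $5/(2\ve)$ and the factor of $2$; both are routine once the support decomposition is in place.
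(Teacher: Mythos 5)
Your proposal is correct and follows exactly the paper's route: the paper likewise takes unions over the $\binom{p}{m}$ coordinate supports of the nets furnished by Lemma~\ref{eq::Pi-cover-numbers}, only stating this more tersely. Your write-up usefully fills in the details the paper leaves implicit, namely that $(1-\ve)^{-1}\leq 2$ and $1+2/\ve\leq 5/(2\ve)$ both require $\ve\leq 1/2$.
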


\begin{proof}
Consider all subsets $T \subset \{1, \ldots, p\}$ with $|T| = m$,
it is clear that the required sets in $\Pi$ and $\Pi'$ in 
Lemma~\ref{lemma:Pi-cover} can be obtained
by unions of corresponding sets supported on the coordinates from $T$.
By Lemma~\ref{eq::Pi-cover-numbers}, the cardinalities of these sets
are at most $(5/2\ve)^m {p \choose m}$.
\end{proof}

\begin{lemma}\textnormal{\citep{LT91}}
\label{lemma:guass-maximality}
Let $X = (X_1, \ldots, X_N)$ be Gaussian in $\R^p$. Then 
$$\expct{\max_{i =1, \ldots, N} |X_i|} \leq 3 \sqrt{\log N} 
\max_{i =1, \ldots, N} \sqrt{\expct{X_i^2}}.$$
\end{lemma}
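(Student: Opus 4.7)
The plan is to use the classical Chernoff/moment-generating-function bound for Gaussian maxima. Since each $X_i$ is centered Gaussian with variance $\sigma_i^2 := \expct{X_i^2}$, its MGF satisfies $\expct{\exp(\lambda X_i)} = \exp(\lambda^2 \sigma_i^2/2)$, and therefore $\expct{\exp(\lambda|X_i|)} \leq \expct{e^{\lambda X_i}} + \expct{e^{-\lambda X_i}} = 2 \exp(\lambda^2 \sigma_i^2/2)$ for every $\lambda > 0$. Setting $\sigma^2 := \max_i \sigma_i^2$ and using the union-type inequality $\exp(\lambda \max_i |X_i|) = \max_i \exp(\lambda |X_i|) \leq \sum_i \exp(\lambda |X_i|)$, one gets
\[
\expct{\exp(\lambda \max_i |X_i|)} \;\leq\; 2N \exp(\lambda^2 \sigma^2/2).
\]

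Next I would apply Jensen's inequality to the convex function $e^{\lambda x}$ to move the expectation inside the log:
\[
\expct{\max_i |X_i|} \;\leq\; \frac{1}{\lambda} \log \expct{\exp(\lambda \max_i |X_i|)} \;\leq\; \frac{\log(2N)}{\lambda} + \frac{\lambda \sigma^2}{2}.
\]
Optimizing over $\lambda > 0$ (take $\lambda = \sigma^{-1}\sqrt{2\log(2N)}$) yields the standard bound
\[
\expct{\max_i |X_i|} \;\leq\; \sigma \sqrt{2 \log(2N)}.
\]

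The final step is merely the numerical comparison $\sqrt{2\log(2N)} \leq 3\sqrt{\log N}$, which after squaring reduces to $2\log 2 \leq 7 \log N$, valid for all $N \geq 2$; the trivial case $N = 1$ is immediate since $\expct{|X_1|} \leq \sigma_1$. I expect no real obstacle here: the only slightly delicate points are the symmetrization step turning $\expct{e^{\lambda|X_i|}}$ into $2\expct{e^{\lambda X_i}}$ (which requires that $X_i$ be centered, the implicit convention in the statement) and the tuning of $\lambda$, both of which are standard. The constant $3$ is not sharp but is the clean choice that accommodates the additive $\log 2$ coming from the symmetrization.
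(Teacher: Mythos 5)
The paper states this lemma without proof, citing Ledoux--Talagrand, and your MGF/Chernoff argument is exactly the standard derivation: every step (the symmetrization $\expct{e^{\lambda|X_i|}}\le 2e^{\lambda^2\sigma_i^2/2}$ for centered $X_i$, the union bound, Jensen, the choice $\lambda=\sigma^{-1}\sqrt{2\log(2N)}$, and the numerical comparison $\sqrt{2\log(2N)}\le 3\sqrt{\log N}$ for $N\ge 2$) checks out. The only blemish is your dismissal of $N=1$: there the right-hand side of the lemma equals $3\sqrt{\log 1}\,\sigma_1=0$, so the stated inequality is in fact false for nondegenerate $X_1$ and your justification $\expct{|X_1|}\le\sigma_1$ does not rescue it --- the lemma is implicitly restricted to $N\ge 2$, which is the only regime in which the paper invokes it.
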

We now prove the key lemma that we need for 
Lemma~\ref{lemma:lower-bound-exp-imp}. The main point of the proof
follows the idea from~\cite{MPT08}: if $U_m \subset 2 \conv  \Pi_m$
for $ \Pi_m \subset B_2^p$ and there is a reasonable control of the 
cardinality of $\Pi_m$ and $\rho_{\max}(m)$ on $\Sigma$, 
then $\tilde{\ell_*}(V)$ is bounded from above.

\begin{lemma}
\label{lemma:tilde-ell-star}
Let $\Pi_m$ be a $1/2$-cover of $\tilde{U}_m$ provided by 
Lemma~\ref{lemma:Pi-cover}.
Then for $1 \leq m < p/2$ and $c = 5e$, it holds that for $V = U_m$
\ben
\label{eq::e1}
\tilde{\ell}_*(U_m) & \leq & \tilde{\ell}_*(2 \conv \Pi_m)  
= 2 \tilde{\ell}_*(\Pi_m) \; \text{ where } \\
\label{eq::e2}
\tilde{\ell}_*(\Pi_m) & \leq &
3 \sqrt{m \log c(p/m) } \sqrt{\rho_{\max}(m)}.
\een
\end{lemma}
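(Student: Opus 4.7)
The plan is to combine three ingredients: the covering bound from Lemma~\ref{lemma:Pi-cover}, the linearity of $\Sigma^{1/2}$ together with the fact that Gaussian suprema are unchanged by passing to the convex hull, and the Gaussian maximal inequality from Lemma~\ref{lemma:guass-maximality}.

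First I would verify the chain of inclusions that yields~\eqref{eq::e1}. Apply Lemma~\ref{lemma:Pi-cover} with $\varepsilon = 1/2$ to get a finite set $\Pi_m \subset B_2^p$ with $\tilde{U}_m \subset 2\conv \Pi_m$ and $|\Pi_m| \leq 5^m \binom{p}{m}$. Since $U_m \subset \tilde{U}_m$, taking the sup over $U_m$ is dominated by the sup over $2\conv \Pi_m$, which gives the first inequality. For the equality, note that $\Sigma^{1/2}(2 \conv \Pi_m) = 2\conv(\Sigma^{1/2} \Pi_m)$ because $\Sigma^{1/2}$ is linear; the functional $t \mapsto \abs{\ip{t, \Sigma^{1/2} h}}$ is a seminorm, so its supremum over a symmetric convex hull equals its supremum over the generating set $\Pi_m$, and the scaling factor $2$ comes out.

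Next, for~\eqref{eq::e2} I would view $\{Z_t := \ip{t, \Sigma^{1/2} h}\}_{t \in \Pi_m}$ as a finite Gaussian process indexed by $\Pi_m$. Its variance satisfies
\[
\expct{Z_t^2} = \twonorm{\Sigma^{1/2} t}^2 \leq \rho_{\max}(m),
\]
where I am using the crucial fact that every element of $\Pi_m$ inherits the support structure of $\tilde{U}_m$, i.e.\ lies in $B_2^p$ and has support of size at most $m$, so the definition of $\rho_{\max}(m)$ in~\eqref{eq::eigen-Sigma} applies directly. Then Lemma~\ref{lemma:guass-maximality} yields
\[
\tilde{\ell}_*(\Pi_m) = \expct{\max_{t \in \Pi_m} \abs{Z_t}} \leq 3 \sqrt{\log |\Pi_m|}\, \sqrt{\rho_{\max}(m)}.
\]
Finally, the standard estimate $\binom{p}{m} \leq (ep/m)^m$ together with $|\Pi_m| \leq 5^m \binom{p}{m}$ gives $\log |\Pi_m| \leq m \log(5ep/m) = m \log(cp/m)$ with $c = 5e$, completing the bound.

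I do not expect a serious obstacle here; the one point that deserves care is keeping track of what the cover $\Pi_m$ looks like, namely that it consists of $m$-sparse unit-ball vectors, so that $\rho_{\max}(m)$ is the right variance proxy (if one forgot the sparsity and replaced it by $\rho_{\max}(p) = \|\Sigma\|_{\rm op}$ the estimate would be much weaker and the linear-sparsity regime would be lost). The rest is bookkeeping: distributing the factor of $2$ out of $2\conv \Pi_m$, using $\binom{p}{m} \leq (ep/m)^m$, and absorbing constants into $c = 5e$.
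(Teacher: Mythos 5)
Your proposal is correct and follows essentially the same route as the paper's proof: the inclusion $U_m \subset \tilde{U}_m \subset 2\conv\Pi_m$ from Lemma~\ref{lemma:Pi-cover}, the reduction of the supremum over the convex hull to the supremum over $\Pi_m$ by convexity of $t \mapsto \abs{\ip{t,\Sigma^{1/2}h}}$, the Gaussian maximal inequality of Lemma~\ref{lemma:guass-maximality} with variance proxy $\twonorm{\Sigma^{1/2}t}^2 \leq \rho_{\max}(m)$ for the $m$-sparse cover points, and the estimate ${p \choose m} \leq (ep/m)^m$. Your emphasis on the sparsity of the cover points being what makes $\rho_{\max}(m)$ (rather than the full operator norm) the correct variance bound is exactly the right point of care.
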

\begin{proof}
The first inequality follows from the definition of $\tilde{\ell}_*$ and 
the fact that
\bens
V = U_m & \subset & \tilde{U}_m \; \subset \; 2 \conv \Pi_m.
\eens
The second equality in~\eqref{eq::e1} holds due to convexity which 
guarantees that 
\bens
\sup_{y \in \conv \Pi_m} \abs{\ip{y, \Sigma^{1/2} h}} & = &
\sup_{y \in \Pi_m} \abs{\ip{y, \Sigma^{1/2} h}} \text{ and hence } \\
\tilde{\ell}_*(2 \conv \Pi_m) \;  =  \; 2 \tilde{\ell}_*(\conv \Pi_m) 
& = & 2 \tilde{\ell}_*(\Pi_m).
\eens
Thus we have for $c = 5e$
\bens
\label{eq::tilde-ell*}
\tilde{\ell}_*(\conv \Pi_m)
 = \tilde{\ell}_*(\Pi_m) & := & \expct{\sup_{t \in \Pi_m} \abs{\ip{t, \Sigma^{1/2} h}}} \\
& \leq & 3 \sqrt{\log |\Pi_m|} 
\sup_{t \in \Pi_m} 
\sqrt{\expct{\abs{\ip{t, \Sigma^{1/2} h}}^2}} \\
& \leq &  
 3 \sqrt{m \log (5ep/m) }
\sup_{t \in \Pi_m} 
\twonorm{\Sigma^{1/2} t} \\
& \leq & 
3 \sqrt{m  \log c (p/m) } \sqrt{\rho_{\max}(m)}
\eens
where we have used Lemma~\ref{lemma:guass-maximality},
~\eqref{eq::cover-number} and the 
bound ${p \choose{m} } \leq \left(\frac{e p}{m} \right)^{m}$,
which is valid for $m < p/2$, and the fact that
$\expct{\abs{\ip{t, \Sigma^{1/2} h}}^2} =
\expct{\abs{\ip{h, \Sigma^{1/2} t}}^2} = \twonorm{\Sigma^{1/2} t}^2.$
\end{proof}

\subsection{Proof of Lemma~\ref{lemma:U-number}}
\begin{proofof2}
It is clear that for all $y \in \Upsilon$,  
$y = \Sigma^{1/2} \delta$ for some $\delta \in E_s$, hence
all equalities in \eqref{eq::Sigma-delta-entropy} hold.
We hence focus on bounding the last term.
For each $\delta \in E_s$, we decompose $\delta$ into a set of vectors
$\delta_{T_0}$, $\delta_{T_1}$,  $\delta_{T_2}$, \ldots, $\delta_{T_K}$ as
in Section~\ref{sec:decomp-vec}.

By Proposition~\ref{prop:magic-cone}, we have
$\norm{\delta_{T_0^c}}_1 \leq k_0 \norm{\delta_{T_0}}_1$.

\silent{
We then continue by dividing $T_0^c$ into subsets of size $s$ and enumerate 
$T_0^c$ such that $T_0^c = \bigcup_{k=1}^K T_k$, where $K \geq 1, 
|T_k| = s, \forall k = 1, \ldots, K-1$, and $|T_K| \leq s$, such that $T_1$ 
contains the indices of the largest $s$ coefficients of $\delta_{T_0^c}$ 
in absolute values, $T_2$ contains the indices of the next 
$s$ largest coefficients, and so on.} 

For each index set $T \subset \{1, \ldots, p \}$, we 
let $\delta_{T}$ represent its $0$-extended version $\delta'$ 
in $\R^p$, such that $\delta'_{T^c} = 0$ and $\delta'_{T} = \delta_T$.
For $\delta_T = 0$, it is understood that 
$\frac{\delta_T}{\twonorm{\delta_T}} :=0$ below.
Thus we have for all $\delta$ in $E_s$ and all $h \in \R^p$,
\begin{eqnarray}
\nonumber
\abs{\ip{h,  \Sigma^{1/2} \delta}}
&  = &
\abs{\ip{h,  \Sigma^{1/2} \delta_{T_{0}}
+ 
\sum_{k \geq 1} \ip{h,  \Sigma^{1/2} \delta_{T_{k}}}}} \\
\nonumber
& \leq & 
\abs{\ip{ h, \Sigma^{1/2} \delta_{T_{0}}}}
+ 
\sum_{k \geq 1} \abs{\ip{h, \Sigma^{1/2} \delta_{T_{k}}}}\\
\nonumber
& \leq & 
\abs{\ip{ \delta_{T_{0}},  \Sigma^{1/2} h}}
+ 
\sum_{k \geq 1} \twonorm{\delta_{T_{k}}} \abs{\ip{
\frac{\delta_{T_{k}}}{\twonorm{\delta_{T_{k}}}}, \Sigma^{1/2} h}} \\
\nonumber
& \leq &
\twonorm{\delta_{T_{0}}} \abs{\ip{\frac{\delta_{T_{0}}}{\twonorm{\delta_{T_{0}}}}, \Sigma^{1/2} h}}
+ 
\sum_{k \geq 1} \twonorm{\delta_{T_{k}}} 
\sup_{t \in U_{s}} \abs{\ip{t, \Sigma^{1/2} h}} \\
\nonumber
& \leq &
\left(\twonorm{\delta_{T_{0}}} + \sum_{k \geq 1} \twonorm{\delta_{T_{k}}} \right)
\sup_{t \in U_{s}} \abs{\ip{t, \Sigma^{1/2} h}}  \\
& \leq &
\label{eq::last-step}
(k_0 + 2) K(s, k_0, \Sigma) \sup_{t \in U_{s}} \abs{\ip{h, \Sigma^{1/2} t}},
\end{eqnarray}
where we have used the following bounds in~\eqref{eq::KS1} and~\eqref{eq::KS2}:
By Assumption~\ref{def:memory} and by construction of its 
corresponding sets $T_0, T_1, \ldots$, we have for all $\delta \in E_s$,
\ben
\label{eq::KS1}
\twonorm{\delta_{T_{0}}}
& \leq & K(s, k_0, \Sigma) \twonorm{\Sigma^{1/2} \delta}  
\; = \; K(s, k_0, \Sigma) \\
\label{eq::KS2}
\sum_{k \geq 1} \twonorm{\delta_{T_{k}}} 
& \leq & (k_0 +1) \twonorm{\delta_{T_0}} \; \leq \; (k_0 +1) K(s, k_0, \Sigma),
\een
where we used the bound in~\eqref{eq::nowhere-bound}.
Thus we have
by~\eqref{eq::last-step} and Lemma~\ref{lemma:tilde-ell-star}
\begin{eqnarray*}
\expct{\sup_{\delta \in E_s} \abs{\ip{h,  \Sigma^{1/2} \delta}}}
& \leq & 
(2 + k_0) K(s, k_0, \Sigma) 
\expct{\sup_{t \in U_{s}} \abs{\ip{h, \Sigma^{1/2} t}}} \\
& \leq &
(2 + k_0) K(s, k_0, \Sigma) \tilde{\ell}_*(U_{s}) \\
& \leq &
3 (2 + k_0) K(s, k_0, \Sigma)  \sqrt{s \log (cp/s)} \sqrt{\rho_{\max}(s)} \\
& : = & \bar{C} \sqrt{s \log (cp/s)}
\end{eqnarray*}
by Lemma~\ref{lemma:tilde-ell-star}, where $\bar{C}$
is as defined in~\eqref{eq::C-define} and $c = 5e$.
This proves Lemma~\ref{lemma:U-number}.
\end{proofof2}

\subsection{Proof of Lemma~\ref{lemma:column-space}}
\begin{proofof2}
Let $h_1, \ldots, h_p$ be independent $N(0, 1)$ Gaussian random variables.
We have  by Lemma~\eqref{lemma:guass-maximality},
\begin{eqnarray*}
\ell_*(\Phi) & := &
\expct{\max_{i =1, \ldots, p} \abs{\sum_{j = 1}^p \rho_{ij} h_j}} \leq
3 \sqrt{\log p} \max_{i = 1, \ldots, p}
\sqrt{\expct{\left(\sum_{j = 1}^p \rho_{ij} h_j \right)^2}} \\
& = &
3 \sqrt{\log p} \max_{i = 1, \ldots, p}
\sqrt{\sum_{j = 1}^p (\rho_{ij})^2 \expct{h_j^2}} \\
& = &
3 \sqrt{\log p} \max_{i = 1, \ldots, p} \sqrt{\Sigma_{ii}}
= 3 \sqrt{\log p},
\end{eqnarray*}
where we used the fact that $\Sigma_{ii} = 1$ for all $i$ and 
$\sigma(h_j) = 1, \forall j$.
\end{proofof2}

\section{Proofs for Theorems in Section~\ref{sec:estimators}}
\label{sec:recovery-proof}
Throughout this section, let $1> \theta > 0$.
Proving both Theorem~\ref{thm:lasso} and Theorem~\ref{thm:DS} involves 
first showing that the optimal solutions to both the Lasso and the
Dantzig selector satisfy the cone constraint as in~\eqref{eq::cone} 
for $I = \supp{\beta}$, for some $k_0 > 0$. Indeed, it holds that 
$k_0 = 1$ for the Dantzig selector  when 
$\lambda_n \geq (1+ \theta) \basepen$, 
and $k_0 =3$ for the Lasso when $\lambda_n \geq 2 (1+ \theta) \basepen$ 
(cf. Lemma~\ref{lemma:magic-number} and~\eqref{eq::DS-magic-number}).
These have been shown before, for example, in~\cite{BRT08} 
and in~\cite{CT07}. We included proofs for
(Lemma~\ref{lemma:magic-number} and~\eqref{eq::DS-magic-number})
for completeness. We then state two propositions for the Lasso estimator 
and the Dantzig selector respectively under $\T_a$,
where $a > 0$ and  $1> \theta > 0$. We first bound the
probability on $\T_a^c$.
\subsection{Bounding $\T^c_a$}
\begin{lemma}
\label{lemma:gaussian-noise}
For fixed design $X$ with $\max_j \|X_j\|_2 \le (1 + \theta) \sqrt{n}$, 
where $0< \theta < 1$,
we have for $\T_a$ as defined in~\eqref{eq::low-noise},  where $a >0$, 
$\prob{\T^c_a} \leq (\sqrt{\pi \log p} p^a)^{-1}.$
\end{lemma}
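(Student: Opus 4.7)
The plan is to bound $\prob{\T^c_a}$ by a standard Gaussian tail estimate applied coordinatewise, followed by a union bound over the $p$ columns of $X$.

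First, observe that since $\epsilon \sim N(0, \sigma^2 I_n)$, for each column $X_j$ of $X$ the scalar $Z_j := X_j^T \epsilon / n$ is Gaussian with mean zero and variance
$\sigma_j^2 := \sigma^2 \twonorm{X_j}^2 / n^2 \leq \sigma^2 (1+\theta)^2 / n,$
using the hypothesis $\twonorm{X_j} \leq (1+\theta)\sqrt{n}$. (No independence across $j$ is needed; only the marginal law of each $Z_j$ matters for a union bound.)

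Next, I will apply the standard one-sided Mills' ratio bound for a centered Gaussian: if $Z \sim N(0,\sigma^2)$ and $t > 0$, then
$\prob{|Z| > t} \leq \sqrt{\tfrac{2}{\pi}}\,\tfrac{\sigma}{t}\,e^{-t^2/(2\sigma^2)}.$
Setting $t = (1+\theta)\basepen = (1+\theta)\sigma\sqrt{1+a}\sqrt{2\log p / n}$ and using $\sigma_j \leq \sigma(1+\theta)/\sqrt{n}$, we get $t/\sigma_j \geq \sqrt{2(1+a)\log p}$, hence
$\prob{|Z_j| > (1+\theta)\basepen} \leq \sqrt{\tfrac{2}{\pi}} \cdot \tfrac{1}{\sqrt{2(1+a)\log p}} \cdot p^{-(1+a)} = \tfrac{1}{\sqrt{\pi(1+a)\log p}}\, p^{-(1+a)}.$

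Finally, a union bound over $j = 1,\ldots,p$ yields
$\prob{\T^c_a} \leq p \cdot \tfrac{1}{\sqrt{\pi(1+a)\log p}}\,p^{-(1+a)} = \tfrac{1}{\sqrt{\pi(1+a)\log p}}\,p^{-a},$
and since $a > 0$ implies $1+a \geq 1$, this is bounded above by $(\sqrt{\pi\log p}\,p^a)^{-1}$, as claimed.

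There is no serious obstacle here; the only subtlety is being careful about the two roles played by $\sigma$ (noise level versus coordinate standard deviation of $Z_j$) and checking the constants so that the factor of $\sqrt{2/\pi}$ combines with the $\sqrt{2(1+a)\log p}$ in the denominator to give exactly $\sqrt{\pi(1+a)\log p}$ before one discards the $\sqrt{1+a}$ factor.
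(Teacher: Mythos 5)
Your proof is correct and follows essentially the same route as the paper's: a Gaussian tail (Mills' ratio) bound on each coordinate $X_j^T\epsilon/n$ using the column-norm hypothesis, followed by a union bound over the $p$ coordinates and the choice $t=(1+\theta)\basepen$. The only cosmetic difference is that you keep the exact standard deviation $\sigma_j$ and bound both factors via $t/\sigma_j \ge \sqrt{2(1+a)\log p}$, whereas the paper first dominates each $Y_j$ by a $N(0,(1+\theta)^2\sigma^2/n)$ variable; the constants work out identically.
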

\begin{proof}
Define random variables: $Y_j = \inv{n} \sum_{i=1}^n \e_i X_{i,j}.$
Note that $\max_{1 \le j \le p} |Y_j| = \|X^T \epsilon/n\|_{\infty}$. 
We have $\expct{(Y_j)} = 0$ and
$\var{(Y_j)} = \twonorm{X_j}^2\sigma^2/n^2 \leq (1 + \theta) \sigma^2/n$. 
Let $c_0 = 1+ \theta$.
Obviously, $Y_j$ has its tail probability dominated by that of
$Z \sim N(0, \frac{c_0^2 \sigma^2}{n})$:
\begin{eqnarray*}
\prob{|Y_j| \geq t} \leq  \prob{|Z| \geq t} \leq 
\frac{2 c_0 \sigma}{\sqrt{2 \pi n} t} \exp\left(\frac{-n t^2}{2 c_0^2
    \sigma_{\e}^2}\right).
\end{eqnarray*}
We can now apply the union bound to obtain:
\bens
\prob{\max_{1 \leq j \leq p} |Y_j| \geq t } & \leq & 
p \frac{c_0 \sigma}{\sqrt{n} t} \exp\left(\frac{-n t^2}{2 c_0^2
    \sigma^2}\right) \\
&=& 
\exp\left(-\left(\frac{n t^2}{2 c_0^2 \sigma^2} + 
\log \frac{t \sqrt{\pi n}}{\sqrt{2} c_0 \sigma} - \log p\right) \right).
\eens
By choosing $t = c_0 \sigma \sqrt{1+a}\sqrt{2 \log p/n}$, 
the right-hand side is bounded by $(\sqrt{\pi \log p} p^a)^{-1}$ for $a \geq 0$.
\end{proof}

\subsection{Proof of Theorem~\ref{thm:lasso}}
\label{sec:thm-lasso-proof}
Let $\hat{\beta}$ be an optimal solution to the Lasso as in~\eqref{eq::origin}.
$S := \supp{\beta}.$ and
$$\upsilon = \hat{\beta} - \beta.$$
We first show Lemma~\ref{lemma:magic-number}; we then apply condition 
$RE(s, k_0, X)$ on $\upsilon$ with $k_0 = 3$ under $\T_a$ 
to show Proposition~\ref{prop:initial-bound-Lasso}.
Theorem~\ref{thm:lasso} follows immediately from
Proposition~\eqref{prop:initial-bound-Lasso}.
\begin{lemma} {\textnormal~\cite{BRT08}}
\label{lemma:magic-number}
Under condition $\T_a$ as defined in~\eqref{eq::low-noise},
$\norm{\upsilon_{S^c}}_1 \leq 3\norm{\upsilon_{S}}_1$
for $\lambda_n \geq 2 (1 + \theta) \basepen$ for the Lasso.
\end{lemma}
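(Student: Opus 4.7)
The plan is to apply the standard variational argument of Bickel–Ritov–Tsybakov to the Lasso optimality. First I would use the fact that $\hat\beta$ minimizes the Lasso objective to obtain
\[
\frac{1}{2n}\|Y - X\hat\beta\|_2^2 + \lambda_n\|\hat\beta\|_1 \;\leq\; \frac{1}{2n}\|Y - X\beta\|_2^2 + \lambda_n\|\beta\|_1.
\]
Substituting $Y - X\hat\beta = \e - X\upsilon$ and $Y - X\beta = \e$, and cancelling $\|\e\|_2^2/(2n)$, this rearranges to the \emph{basic inequality}
\[
\frac{1}{2n}\|X\upsilon\|_2^2 \;\leq\; \frac{1}{n}\langle X\upsilon,\e\rangle + \lambda_n\bigl(\|\beta\|_1 - \|\hat\beta\|_1\bigr).
\]

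Next I would control the noise term via Hölder duality: $\frac{1}{n}\langle X\upsilon,\e\rangle = \frac{1}{n}\langle \upsilon, X^T\e\rangle \leq \|\upsilon\|_1 \cdot \|X^T\e/n\|_\infty$. On the event $\T_a$ we have $\|X^T\e/n\|_\infty \leq (1+\theta)\basepen \leq \lambda_n/2$ by the choice $\lambda_n \geq 2(1+\theta)\basepen$, so the noise term is at most $(\lambda_n/2)\|\upsilon\|_1$.

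For the penalty difference, I would use $\beta_{S^c} = 0$ together with $\hat\beta_{S^c} = \upsilon_{S^c}$ and the reverse triangle inequality $\|\beta_S\|_1 - \|\hat\beta_S\|_1 \leq \|\upsilon_S\|_1$ to obtain
\[
\|\beta\|_1 - \|\hat\beta\|_1 \;\leq\; \|\upsilon_S\|_1 - \|\upsilon_{S^c}\|_1.
\]
Plugging these bounds back, splitting $\|\upsilon\|_1 = \|\upsilon_S\|_1 + \|\upsilon_{S^c}\|_1$, and using $\|X\upsilon\|_2^2/(2n) \geq 0$ on the left, the inequality collapses to
\[
0 \;\leq\; \tfrac{\lambda_n}{2}\bigl(\|\upsilon_S\|_1+\|\upsilon_{S^c}\|_1\bigr) + \lambda_n\bigl(\|\upsilon_S\|_1 - \|\upsilon_{S^c}\|_1\bigr),
\]
which simplifies to $\|\upsilon_{S^c}\|_1 \leq 3\|\upsilon_S\|_1$ after dividing by $\lambda_n/2$ and rearranging. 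There is no serious obstacle here: everything is one-line algebra once the correct choice of the factor $2$ in $\lambda_n \geq 2(1+\theta)\basepen$ is used — that is precisely what makes the noise contribution absorbable and yields the constant $k_0 = 3$ (as opposed to $k_0=1$ for the Dantzig selector, where the factor $2$ is absent). The only step requiring care is the sign-tracking in the basic inequality and the clean decomposition of $\|\beta\|_1 - \|\hat\beta\|_1$ across $S$ and $S^c$.
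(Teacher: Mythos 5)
Your proposal is correct and follows essentially the same route as the paper: both start from the Lasso optimality (basic) inequality, bound the noise term by H\"older duality on the event $\T_a$ using $\lambda_n \geq 2(1+\theta)\basepen$, decompose $\|\beta\|_1 - \|\hat\beta\|_1$ across $S$ and $S^c$ via the triangle inequality with $\beta_{S^c}=0$, and drop the nonnegative term $\|X\upsilon\|_2^2/n$ to conclude $\|\upsilon_{S^c}\|_1 \leq 3\|\upsilon_S\|_1$. No gaps.
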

\begin{proof}
By the optimality of $\hat{\beta}$, we have
\begin{eqnarray*}
\lambda_{n} \norm{\beta}_1 - 
\lambda_{n} \norm{\hat\beta}_1 
& \geq & 
\inv{2n} \norm{Y- X  \hat\beta}^2_2  - 
\inv{2n} \norm{Y- X \beta }^2_2 \\
& \geq & 
\inv{2n} \| X \upsilon \|^2_2 - \frac{\upsilon^T X^T \e}{n} 
\end{eqnarray*}
Hence under condition $\T_a$ as in~\eqref{eq::low-noise}, we have
for $\lambda_n \geq 2 (1+\theta) \basepen$,
\begin{eqnarray}
 \nonumber
\twonorm{X \upsilon}^2/n 
& \leq &
2 \lambda_{n} \norm{\beta}_1
-  2 \lambda_{n} \norm{\hat\beta}_1 +  
2 \norm{\frac{X^T \e}{n}}_{\infty} \norm{\upsilon}_1 \\
\label{eq::precondition}
& \leq & \lambda_{n} \left(2\norm{\beta}_1 
- 2 \norm{\hat\beta}_1 + \norm{\upsilon}_1\right),
\end{eqnarray}
where by the triangle inequality, and $\beta_{\Sc} = 0$, we have
\begin{eqnarray} 
\nonumber
\lefteqn{
0 \leq 2 \norm{\beta}_1 -  2 \norm{\hat\beta}_1 + \norm{\upsilon}_1 }
\\ \nonumber
& = &
2 \norm{\beta_S}_1 - 2 \norm{\hat\beta_{S}}_1 -
2 \norm{\upsilon_{\Sc}}_1 + \norm{\upsilon_S}_1 + \norm{\upsilon_{S^c}}_1  \\
\label{eq::magic-number-2}
& \leq &
3 \norm{\upsilon_{S}}_1 - \norm{\upsilon_{S^c}}_1.
\end{eqnarray}
Thus Lemma~\ref{lemma:magic-number} holds.
\end{proof}

We now show Proposition~\ref{prop:initial-bound-Lasso}, where 
except for the $\ell_2$-convergence rate as in~\eqref{eq::2-norm-generic},
all bounds have essentially been shown in~\cite{BRT08} (as Theorem 7.2) 
under Assumption $RE(s, 3, X)$;
The bound on $\twonorm{\upsilon}$, which as far as the author is aware of, 
is new; however, this result is indeed also implied by Theorem~7.2 
in~\cite{BRT08} given Proposition~\ref{prop:two-RE-same} as derived 
in this paper. We note that the same remark holds for 
Proposition~\ref{prop:initial-bound-DS}; see~\citet[Theorem~7.1]{BRT08}.
\begin{proposition}{\textnormal{({\bf $\ell_p$-loss for the Lasso})}} 
\label{prop:initial-bound-Lasso}
Suppose that $RE(s, 3, X)$ holds.
Let $Y = X \beta + \e$, for $\e$ being i.i.d. $N(0, \sigma^2)$ and
$\twonorm{X_j}  \leq (1+ \theta) \sqrt{n}$. 
Let $\hat{\beta}$ be an optimal solution to~\eqref{eq::origin} with 
$\lambda_{n} \geq 2 (1+\theta) \basepen$, where $a \geq 0$. 
Let $\upsilon = \hat{\beta} - \beta$.
Then on condition $\T_a$ as in~\eqref{eq::low-noise},
the following hold for $B_0 =  4 K^2(s, 3, X)$ 
\begin{eqnarray}
\label{eq::S2norm}
\norm{\upsilon_{S}}_{2} & \leq & B_0 \lambda_{n} \sqrt{s}, \\
\label{eq::1-norm-generic}
\norm{\upsilon}_1 & \leq & B_0 \lambda_{n} s, \; \text{ where } \;
\norm{\upsilon_{\Sc}}_1 \leq 3 \norm{\upsilon_S}_1 \\
\label{eq::2-norm-generic}
{and} \; \; \twonorm{\upsilon} & \leq & 2 B_0 \lambda_n \sqrt{s}.
\end{eqnarray}
\end{proposition}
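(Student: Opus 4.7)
\begin{proofof2}
The plan is to combine the Lasso basic inequality (already encoded in~\eqref{eq::precondition} and~\eqref{eq::magic-number-2}) with the restricted eigenvalue condition applied to $\upsilon = \hat\beta - \beta$. By Lemma~\ref{lemma:magic-number}, under $\T_a$ and the choice $\lambda_n \geq 2(1+\theta)\basepen$, the error $\upsilon$ satisfies $\norm{\upsilon_{S^c}}_1 \leq 3\norm{\upsilon_S}_1$. This is precisely the admissibility condition for $RE(s, 3, X)$ with $J_0 = S$ (recall $|S| \leq s$), so Proposition~\ref{prop:magic-cone} applies.

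First I would establish the $\ell_2$ bound on $\upsilon_S$. Combining~\eqref{eq::precondition} with~\eqref{eq::magic-number-2} gives the ``prediction side'' inequality
$$\frac{\twonorm{X\upsilon}^2}{n} \;\leq\; \lambda_n\bigl(3\norm{\upsilon_S}_1 - \norm{\upsilon_{S^c}}_1\bigr) \;\leq\; 3\lambda_n\norm{\upsilon_S}_1 \;\leq\; 3\lambda_n\sqrt{s}\,\twonorm{\upsilon_S},$$
while on the ``estimation side'', $RE(s,3,X)$ yields $\twonorm{\upsilon_S}^2 \leq K^2(s,3,X)\,\twonorm{X\upsilon}^2/n$. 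Chaining the two inequalities and cancelling one factor of $\twonorm{\upsilon_S}$ delivers~\eqref{eq::S2norm} with constant at most $3K^2(s,3,X) \leq B_0$.

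Next, the $\ell_1$ bound~\eqref{eq::1-norm-generic} follows by first noting $\norm{\upsilon}_1 = \norm{\upsilon_S}_1 + \norm{\upsilon_{S^c}}_1 \leq 4\norm{\upsilon_S}_1$ from the cone constraint, then Cauchy--Schwarz gives $\norm{\upsilon_S}_1 \leq \sqrt{s}\,\twonorm{\upsilon_S}$, and~\eqref{eq::S2norm} closes the loop. Finally, for the full $\ell_2$ bound~\eqref{eq::2-norm-generic}, I will invoke the telescoping decomposition of Section~\ref{sec:decomp-vec}: splitting $\upsilon$ into blocks $\upsilon_{T_0}, \upsilon_{T_1}, \ldots$ of size at most $s$ ordered by decreasing magnitude, inequality~\eqref{eq::cone-two-norm-2} yields $\twonorm{\upsilon} \leq \twonorm{\upsilon_{T_0}} + s^{-1/2}\norm{\upsilon}_1$. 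Since $|T_0|=s$, the same RE-plus-basic-inequality argument bounds $\twonorm{\upsilon_{T_0}}$ by $B_0\lambda_n\sqrt{s}$ (using the form~\eqref{eq::admissible-interpret}), and substituting~\eqref{eq::1-norm-generic} for the second term gives $\twonorm{\upsilon} \leq 2B_0\lambda_n\sqrt{s}$.

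The only non-routine step is tracking the constant so that the same $B_0 = 4K^2(s,3,X)$ accommodates all three bounds simultaneously; everything else is a direct application of the basic inequality and the RE condition, with Lemma~\ref{lemma:magic-number} doing the heavy lifting that places $\upsilon$ inside the cone where RE has teeth.
\end{proofof2}
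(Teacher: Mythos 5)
Your overall architecture is the paper's: basic inequality, cone constraint from Lemma~\ref{lemma:magic-number}, $RE(s,3,X)$ applied to $\upsilon_S$, and the block decomposition $\twonorm{\upsilon}\le\twonorm{\upsilon_{T_0}}+s^{-1/2}\norm{\upsilon}_1$ for the final bound. The derivation of~\eqref{eq::S2norm} is correct (and in fact yields the slightly better constant $3K^2(s,3,X)\le B_0$). However, there is a genuine gap in your route to~\eqref{eq::1-norm-generic}: the chain $\norm{\upsilon}_1\le 4\norm{\upsilon_S}_1\le 4\sqrt{s}\,\twonorm{\upsilon_S}\le 4\sqrt{s}\cdot 3K^2(s,3,X)\lambda_n\sqrt{s}$ gives $\norm{\upsilon}_1\le 12K^2(s,3,X)\lambda_n s = 3B_0\lambda_n s$, not the claimed $B_0\lambda_n s$. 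No amount of ``constant tracking'' closes this factor of $3$ along the path you describe, and the loss propagates: in your final step $\twonorm{\upsilon}\le\twonorm{\upsilon_{T_0}}+s^{-1/2}\norm{\upsilon}_1$ the second term becomes $3B_0\lambda_n\sqrt{s}$, so you cannot reach $2B_0\lambda_n\sqrt{s}$ either.

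The missing idea is the one the paper uses in~\eqref{eq::last}--\eqref{eq::last-3}: do not discard the penalty term from the left-hand side of the basic inequality. Keeping it, one has
\begin{equation*}
\frac{\twonorm{X\upsilon}^2}{n}+\lambda_n\norm{\upsilon}_1
\;\le\; 4\lambda_n\norm{\upsilon_S}_1
\;\le\; 4\lambda_n\sqrt{s}\,K(s,3,X)\,\frac{\twonorm{X\upsilon}}{\sqrt{n}}
\;\le\; 4K^2(s,3,X)\lambda_n^2 s+\frac{\twonorm{X\upsilon}^2}{n},
\end{equation*}
where the last step is $2ab\le a^2+b^2$ with $a=2K(s,3,X)\lambda_n\sqrt{s}$ and $b=\twonorm{X\upsilon}/\sqrt{n}$. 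Cancelling $\twonorm{X\upsilon}^2/n$ yields $\norm{\upsilon}_1\le 4K^2(s,3,X)\lambda_n s=B_0\lambda_n s$ directly, without ever passing through $\twonorm{\upsilon_S}$. With that $\ell_1$ bound in hand, the same chain also gives $\twonorm{X\upsilon}/\sqrt{n}\le\sqrt{4\lambda_n\norm{\upsilon_S}_1}\le 4K(s,3,X)\lambda_n\sqrt{s}$, hence $\twonorm{\upsilon_{T_0}}\le K(s,3,X)\twonorm{X\upsilon}/\sqrt{n}\le B_0\lambda_n\sqrt{s}$ by the universality of the RE condition, and the decomposition then delivers $\twonorm{\upsilon}\le 2B_0\lambda_n\sqrt{s}$ exactly as you intended.
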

\begin{proof}
\label{sec:relaxed}
The first part of this proof follows that of~\cite{BRT08}.
Now under condition $\T_a$, by~\eqref{eq::precondition}
and~\eqref{eq::magic-number-2},
\begin{eqnarray}
\nonumber
\twonorm{X \upsilon}^2/n +  \lambda_{n} \norm{\upsilon}_1
& \leq &
\lambda_{n} 
\left(3 \norm{\upsilon_{S}}_1 - \norm{\upsilon_{S^c}}_1 + \norm{\upsilon_S}_1 
+ \norm{\upsilon_{\Sc}}_1 \right)  \\
\label{eq::last}
& = &  
4 \lambda_{n} \norm{\upsilon_S}_1
\leq 4\lambda_{n} \sqrt {s}  \norm{\upsilon_S}_2 \\
\label{eq::last-2} 
& \leq &
4 \lambda_{n} \sqrt{s} K(s, 3, X) \twonorm{X \upsilon}/\sqrt{n} \\
& \leq &
\label{eq::last-3} 
4 K^2(s, 3, X) \lambda_{n}^2 s + \twonorm{X \upsilon}^2/n.
\end{eqnarray}
where~\eqref{eq::last-2} holds by definition of $RE(s, 3, X)$;
Thus we have by~\eqref{eq::last-3} that
\ben
\label{eq::last-5} 
\norm{\upsilon_S}_1 \leq \norm{\upsilon}_1 \leq 
4 K^2(s, 3, X) \lambda_{n} s,
\een
which implies that ~\eqref{eq::1-norm-generic} holds with 
$B_0 =  4 K^2(s, 3, X)$.
Now by $RE(s, 3, X)$ and~\eqref{eq::last}, we have 
\begin{eqnarray}
\label{eq::last-4}
\norm{\upsilon_S}^2_2 \leq 
K^2(s, 3, X)\twonorm{X \upsilon}^2/n 
& \leq & 
K^2(s, 3, X) 4 \lambda_{n} \sqrt {s}  \norm{\upsilon_S}_2
\end{eqnarray}
which immediately implies that~\eqref{eq::S2norm} holds.

Finally, we have by~\eqref{eq::cone-two-norm-2},~\eqref{eq::last-5}, 
~\eqref{eq::cone-max} and the $RE(s, 3, X)$ condition,
\ben
\nonumber
\twonorm{\upsilon} & \leq & 
\twonorm{\upsilon_{T_0}} + s^{-1/2}  \norm{\upsilon}_1   \\
\label{eq::univers}
& \leq & 
K(s, 3, X) \twonorm{X \upsilon}/\sqrt{n} 
+ 
4 K^2(s, 3, X) \lambda_{n} \sqrt{s}, \\
\label{eq::plug-last}
& \leq & 
K(s, 3, X) \sqrt{4 \lambda_n \norm{\upsilon_S}_1 } 
+ 4 K^2(s, 3, X) \lambda_{n} \sqrt{s}, \\ 
\label{eq::plug-last-5}
& \leq &  8  \lambda_n  K^2(s, 3, X) \sqrt{s}.
\een
where in~\eqref{eq::univers}, we crucially exploit the universality 
of the RE condition; in~\eqref{eq::plug-last}, we use the bound 
in~\eqref{eq::last}; and in~\eqref{eq::plug-last-5}, we use~\eqref{eq::last-5}.
\end{proof}

\subsection{Proof of Theorem~\ref{thm:DS}}
Let $\hat{\beta}$ be an optimal solution to the Dantzig selector as 
into~\eqref{eq::DS-func}. Let $S := \supp{\beta}.$ and
$$\upsilon = \hat{\beta} - \beta.$$
We first show Lemma~\ref{lemma:magic-number-DS}; we then 
apply condition $RE(s, k_0, X)$ to $\upsilon$ with $k_0 = 1$ under $\T_a$ 
to show Proposition~\ref{prop:initial-bound-DS}.
Theorem~\ref{thm:DS} follows from immediately from
Proposition~\eqref{prop:initial-bound-DS}.

\begin{lemma}{\textnormal(\cite{CT07})}
\label{lemma:magic-number-DS}
Under condition $\T_a$, 
$\norm{\upsilon_{S^c}}_1 \leq \norm{\upsilon_{S}}_1$
for $\lambda_n \geq (1 + \theta) \basepen$, 
where $a \geq 0$ and $0< \theta < 1$ for the Dantzig selector.
\end{lemma}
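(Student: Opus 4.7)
The plan is to combine feasibility of the true parameter $\beta$ for the Dantzig program with the $\ell_1$-optimality of $\hat\beta$, then decompose every $\ell_1$-norm along the support $S = \supp(\beta)$ and its complement.

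First I would verify that $\beta$ itself is feasible under $\T_a$. Since $Y - X\beta = \e$ and on $\T_a$ we have $\norm{X^T \e/n}_{\infty} \leq (1+\theta) \basepen \leq \lambda_n$ by the hypothesis $\lambda_n \geq (1+\theta)\basepen$, so $\beta$ satisfies the constraint in~\eqref{eq::DS-func}. The optimality of $\hat\beta$ then gives
\beq
\norm{\hat\beta}_1 \leq \norm{\beta}_1.
\eeq

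Next I would decompose both sides along $S$ and $S^c$. Because $\beta_{S^c} = 0$, the right-hand side equals $\norm{\beta_S}_1$. On the left, since $\hat\beta_{S^c} = \beta_{S^c} + \upsilon_{S^c} = \upsilon_{S^c}$, we have $\norm{\hat\beta}_1 = \norm{\hat\beta_S}_1 + \norm{\upsilon_{S^c}}_1$. Applying the reverse triangle inequality on $S$ gives $\norm{\hat\beta_S}_1 \geq \norm{\beta_S}_1 - \norm{\upsilon_S}_1$. Plugging these into the displayed optimality inequality and canceling $\norm{\beta_S}_1$ from both sides yields
\beq
\norm{\upsilon_{S^c}}_1 \leq \norm{\upsilon_S}_1,
\eeq
which is the claimed cone constraint with $k_0 = 1$.

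The only real obstacle is the feasibility step: it requires exactly the lower bound $\lambda_n \geq (1+\theta)\basepen$ together with the high-probability event $\T_a$ from~\eqref{eq::low-noise}, which in turn uses the column-norm bound $\twonorm{X_j} \leq (1+\theta)\sqrt{n}$ supplied by $\F(\theta)$. Once feasibility is established, the remainder is a routine manipulation of $\ell_1$-norms split over $S$ and $S^c$, so the argument is considerably shorter than the corresponding Lasso argument in Lemma~\ref{lemma:magic-number} (which loses a factor of $2$ in front of $\lambda_n$ and hence yields $k_0 = 3$ rather than $k_0 = 1$).
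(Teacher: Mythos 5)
Your proposal is correct and follows essentially the same route as the paper: feasibility of $\beta$ on $\T_a$ (using $\lambda_n \geq (1+\theta)\basepen$), then $\norm{\hat\beta}_1 \leq \norm{\beta}_1$ by optimality, then the triangle-inequality decomposition over $S$ and $S^c$ with $\beta_{S^c}=0$ to cancel $\norm{\beta_S}_1$ and obtain $\norm{\upsilon_{S^c}}_1 \leq \norm{\upsilon_S}_1$. This matches the paper's display in~\eqref{eq::DS-magic-number} step for step.
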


\begin{proof}
Clearly the true vector $\beta$ is feasible to~\eqref{eq::DS-func}, as
\bens
\norm{\inv{n} X^T (Y - X \beta)}_{\infty} 
= \norm{\inv{n} X^T \epsilon}_{\infty}  \; \leq \; 
(1+\theta) \basepen \leq \lambda_n,
\eens
hence by the optimality of $\hat{\beta}$,
$$\norm{\hat{\beta}}_1 \leq \norm{\beta}_1.$$
Hence it holds under for $\upsilon = \hat\beta - \beta$ that
\ben
\label{eq::DS-magic-number}
\norm{\beta}_1 - \norm{\upsilon_S}_1 + \norm{\upsilon_{\Sc}}_1
\leq \norm{\beta + \upsilon} = \norm{\hat\beta}_1 \leq \norm{\beta}_1
\een
and hence $\upsilon$ obeys the cone constraint as desired.
\end{proof}

\begin{proposition}
{\textnormal{({\bf $\ell_p$-loss for the Dantzig selector})}}
\label{prop:initial-bound-DS}
Suppose that $RE(s, 1, X)$ holds. 
Let $Y = X \beta + \e$, for $\e$ being i.i.d. $N(0, \sigma^2)$ and
$\twonorm{X_j} \leq (1+\theta)  \sqrt{n}$. 
Let $\hat\beta$ be an optimal solution 
to~\eqref{eq::DS-func} with 
$\lambda_{n} \geq (1+\theta)  \basepen$, where $a \geq 0$
and $0< \theta < 1$.
Then on condition $\T_a$ as in~\eqref{eq::low-noise},
the following hold with $B_1 = 4 K^2(s, 1, X)$
\begin{eqnarray}
\label{eq::S2norm-DS}
\norm{\upsilon_{S}}_{2} & \leq & B_1 \lambda_n \sqrt{s}, \\
\label{eq::1-norm-generic-DS}
\norm{\upsilon}_1 & \leq & 2B_1 \lambda_n s, \; \text{ where } \; 
\norm{\upsilon_{S^c}}_1 \leq \norm{\upsilon_{S}}_1 \\
\text{ and } \; \; \twonorm{\upsilon} & \leq & 3 B_1 \lambda_n \sqrt{s}.
\end{eqnarray}
\end{proposition}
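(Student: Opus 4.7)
The plan is to mirror the Lasso argument in Proposition~\ref{prop:initial-bound-Lasso}, but to replace the use of the Lasso optimality gap by the feasibility of both $\beta$ and $\hat{\beta}$ for the Dantzig selector constraint. By Lemma~\ref{lemma:magic-number-DS} we already have the cone inequality $\norm{\upsilon_{S^c}}_1 \leq \norm{\upsilon_S}_1$, so $\upsilon$ is admissible to~\eqref{eq::admissible} with $k_0=1$, and we can invoke $RE(s,1,X)$. On $\T_a$, since $\lambda_n \geq (1+\theta)\basepen$, the true $\beta$ is feasible for~\eqref{eq::DS-func}, and $\hat{\beta}$ is feasible by construction; by the triangle inequality
\[
\norm{\tfrac{1}{n} X^T X \upsilon}_{\infty} \;\leq\; \norm{\tfrac{1}{n} X^T(Y-X\beta)}_{\infty} + \norm{\tfrac{1}{n} X^T(Y-X\hat{\beta})}_{\infty} \;\leq\; 2\lambda_n.
\]

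Next I would obtain the crucial energy bound via H\"older's inequality:
\[
\tfrac{1}{n}\twonorm{X\upsilon}^2 \;=\; \upsilon^T\bigl(\tfrac{1}{n}X^T X\upsilon\bigr) \;\leq\; \norm{\upsilon}_1\cdot \norm{\tfrac{1}{n}X^T X\upsilon}_{\infty} \;\leq\; 2\lambda_n\norm{\upsilon}_1.
\]
Combined with the cone constraint, this yields $\twonorm{X\upsilon}^2/n \leq 4\lambda_n \norm{\upsilon_S}_1 \leq 4\lambda_n\sqrt{s}\,\twonorm{\upsilon_S}$ since $|S|\leq s$. Applying $RE(s,1,X)$ to $\upsilon$ gives $\twonorm{\upsilon_S} \leq K(s,1,X)\,\twonorm{X\upsilon}/\sqrt{n}$, so substituting and solving yields $\twonorm{\upsilon_S} \leq 4K^2(s,1,X)\lambda_n\sqrt{s} = B_1\lambda_n\sqrt{s}$, proving~\eqref{eq::S2norm-DS}. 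The $\ell_1$ bound then follows from $\norm{\upsilon}_1 \leq 2\norm{\upsilon_S}_1 \leq 2\sqrt{s}\,\twonorm{\upsilon_S} \leq 2B_1 \lambda_n s$, which is~\eqref{eq::1-norm-generic-DS}.

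For the $\ell_2$ bound on the full $\upsilon$, I would invoke the decomposition of Section~\ref{sec:decomp-vec}, applied to $\upsilon$ which satisfies~\eqref{eq::free-cone-max} with $k_0=1$. This gives
\[
\twonorm{\upsilon} \;\leq\; \twonorm{\upsilon_{T_0}} + s^{-1/2}\norm{\upsilon}_1.
\]
The second term is at most $2B_1\lambda_n\sqrt{s}$ by the $\ell_1$ bound just proved. For the first term, I exploit the \emph{universality} of the RE condition exactly as in~\eqref{eq::univers}: $\twonorm{\upsilon_{T_0}} \leq K(s,1,X)\,\twonorm{X\upsilon}/\sqrt{n}$. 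Using the energy bound together with $\norm{\upsilon_S}_1 \leq \sqrt{s}\,\twonorm{\upsilon_S} \leq B_1\lambda_n s$ yields $\twonorm{X\upsilon}/\sqrt{n} \leq 2\sqrt{B_1}\,\lambda_n\sqrt{s}$, so $\twonorm{\upsilon_{T_0}} \leq 2K(s,1,X)\sqrt{B_1}\,\lambda_n\sqrt{s} = B_1\lambda_n\sqrt{s}$. Adding the two terms gives $\twonorm{\upsilon}\leq 3B_1\lambda_n\sqrt{s}$ as required.

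I do not anticipate a real obstacle here; this is a direct adaptation of the Lasso proof, differing only in the starting inequality. The one place to be careful is tracking constants faithfully: the factor of $2$ in $\norm{X^T X\upsilon/n}_\infty \leq 2\lambda_n$ replaces the factor of $2$ that arose from the Lasso optimality step, so the final constants $B_1 = 4K^2(s,1,X)$ and the multipliers $2$ and $3$ in~\eqref{eq::1-norm-generic-DS} and the $\ell_2$ bound propagate cleanly from the $k_0=1$ version of the cone argument.
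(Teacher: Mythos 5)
Your proposal is correct and follows essentially the same route as the paper's proof: the feasibility of both $\beta$ and $\hat\beta$ gives $\norm{X^TX\upsilon/n}_\infty\le 2\lambda_n$, H\"older plus the $k_0=1$ cone constraint gives the energy bound $\twonorm{X\upsilon}^2/n\le 4\lambda_n\sqrt{s}\twonorm{\upsilon_S}$, the RE condition yields \eqref{eq::S2norm-DS} and \eqref{eq::1-norm-generic-DS}, and the decomposition $\twonorm{\upsilon}\le\twonorm{\upsilon_{T_0}}+s^{-1/2}\norm{\upsilon}_1$ with the universality of $RE(s,1,X)$ applied to $T_0$ gives the final $3B_1\lambda_n\sqrt{s}$ bound, with constants matching the paper's exactly.
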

\begin{remark}
See comments in front of Proposition~\ref{prop:initial-bound-Lasso}.
\end{remark}
\begin{proofof}{\textnormal{Proposition~\ref{prop:initial-bound-DS}}}
Our proof follows that of~\cite{BRT08}.
Let $\hat\beta$ as an optimal solution to~\eqref{eq::DS-func}.
Let $\upsilon = \hat\beta - \beta$ and let $\T_a$ hold for $a> 0$ and 
$0 < \theta < 1$. By the constraint of~\eqref{eq::DS-func}, we have
\bens
\norm{\inv{n} X^T X \upsilon}_{\infty} \leq
\norm{\inv{n} X^T (Y - X \hat{\beta})}_{\infty} 
+ \norm{\inv{n} X^T \epsilon}_{\infty}  \; \leq \; 2 \lambda_n.
\eens
and hence by Lemma~\ref{lemma:magic-number-DS}, we have
\ben
\nonumber
\twonorm{X \upsilon}^2/n & = & \frac{\upsilon^T X^T X \upsilon}{n}
\leq
 \norm{ \inv{n}\upsilon^T X^T X}_{\infty} \norm{\upsilon}_1 
 \leq 2 \lambda_n \norm{\upsilon}_1 \\
\label{eq::DS-last-pre}
& \leq &
4 \lambda_n \norm{\upsilon_{S}}_1 \leq 
4 \lambda_n \sqrt{s} \norm{\upsilon_{S}}_2.
\een
We now apply condition $RE(s, k_0, X)$ on $\upsilon$ with $k_0 = 1$ 
to obtain
\begin{eqnarray}
\label{eq::DS-last}
\norm{\upsilon_S}^2_2 \leq K^2(s, 1, X)\twonorm{X \upsilon}^2/n 
& \leq & 
K^2(s, 1, X) 4 \lambda_n \sqrt{s} \norm{\upsilon_{S}}_2,
\end{eqnarray}
which immediately implies that ~\eqref{eq::S2norm-DS} holds.
Hence~\eqref{eq::1-norm-generic-DS} holds with $B_1 = 4 K^2(s, 1, X)$ 
given~\eqref{eq::DS-last} and
\ben
\label{eq::DS-last2}
\norm{\upsilon_{\Sc}}_1 \leq \norm{\upsilon_S}_1 
\leq 4 K^2(s, 1, X) \lambda_n s.
\een

Finally, we have by~\eqref{eq::cone-two-norm-2},~\eqref{eq::DS-last2}, 
~\eqref{eq::cone-max} and the $RE(s, 3, X)$ condition,
\ben
\nonumber
\twonorm{\upsilon} & \leq & 
\twonorm{\upsilon_{T_0}} + s^{-1/2}  \norm{\upsilon}_1   \\
\label{eq::univers-DS}
& \leq & 
K(s, 1, X) \twonorm{X \upsilon}/\sqrt{n} 
+ 
8 K^2(s, 1, X) \lambda_{n} \sqrt{s}, \\
\label{eq::plug-last-DS}
& \leq & 
K(s, 1, X) \sqrt{4 \lambda_n \norm{\upsilon_S}_1 } 
+ 8 K^2(s, 1, X) \lambda_{n} \sqrt{s}, \\ 
\label{eq::plug-last-5-DS}
& \leq &  12  \lambda_n  K^2(s, 1, X) \sqrt{s}.
\een
where in~\eqref{eq::univers-DS}, we crucially exploit the universality 
of the RE condition, and in~\eqref{eq::plug-last-DS}, we use the bound 
in~\eqref{eq::DS-last2} and~\eqref{eq::DS-last-pre}; and in
~\eqref{eq::plug-last-5-DS}, we use~\eqref{eq::DS-last2} again.
\end{proofof}

\section{A fundamental proof for the Gaussian random design}
\label{sec:Gaussian}
In this section, we state a theorem for the Gaussian random design,
following a more fundamental proof given by~\cite{RWY09} (cf. Proposition 1).
We apply their method and provide a tighter bound on the sample size 
that is required in order for $X$ to satisfy the RE condition, 
where $X$ is composed of independent rows with  multivariate Gaussian 
vectors drawn from $N(0, \Sigma)$ as in~\ref{eq::rand-des-gauss}.
We note that both upper and lower bounds in Theorem~\ref{thm:main}
are obtained  in a way that is quite similar to how the largest
and smallest singular values of a Gaussian random matrix are 
upper and lower bounded respectively; see for example~\cite{DS01}. 
The improvement over results in~\cite{RWY09} comes from the tighter 
bound on $\ell_*(\Upsilon)$ as developed in Lemma~\ref{lemma:U-number}.
Formally, we have the following.
\begin{theorem}
\label{thm:main}
Set $1 \leq n \leq p$ and $0< \theta < 1$. Consider a random design $X$ as 
in\eqref{eq::rand-des}, where $\Sigma$ 
satisfies~\eqref{eq::admissible-random} and~\eqref{eq::eigen-Sigma}.
Suppose $s < p/2$ and for $\bar{C}$ as in~\eqref{eq::C-define},
\ben
\label{eq::sample-size}
n > \inv{\theta^2} \left(\bar{C}\sqrt{s \log (5ep/s)} + \sqrt{2d \log p}\right)^2
\een
for $d > 0$. Then we have with probability at least $1- 4/p^d$, 
\begin{eqnarray}
\label{eq::gauss-bounds}
(1 - \theta -o(1)) \twonorm{\Sigma^{1/2} \delta} \; \leq \; 
\twonorm{X \delta}/\sqrt{n} 
& \leq & (1 + \theta) \twonorm{\Sigma^{1/2} \delta}
\end{eqnarray}
holds for all $\delta \not= 0$ that is admissible 
to~\eqref{eq::admissible-random}, that is, 
$\exists$ some $J_0 \in \{1, \ldots, p \}$ such that $|J_0| \leq s$
and $\norm{\delta_{J_0^c}}_1 \leq k_0 \norm{\delta_{J_0}}_1$,
where $k_0 > 0$.
\end{theorem}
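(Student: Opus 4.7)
The plan is to mirror the direct Gaussian approach of Raskutti--Wainwright--Yu, replacing their complexity estimate with the sharper Lemma~\ref{lemma:U-number}. First I would reduce to a uniform statement on a subset of the sphere: by homogeneity of~\eqref{eq::gauss-bounds} in $\delta$, and since every admissible $\delta\ne 0$ (with $\twonorm{\Sigma^{1/2}\delta}>0$ by Proposition~\ref{prop:magic-cone}) can be rescaled so that $\twonorm{\Sigma^{1/2}\delta}=1$, it suffices to prove~\eqref{eq::gauss-bounds} uniformly over $\delta\in E_s'\subseteq E_s$ (cf.\ Remark~\ref{rem:twosets}). Setting $u=\Sigma^{1/2}\delta\in\Upsilon\subset S^{p-1}$, I have $X\delta=\Psi u$ with $\Psi$ an $n\times p$ matrix of i.i.d.\ $N(0,1)$ entries, so the task becomes controlling $\inf_{u\in\Upsilon}\twonorm{\Psi u}$ and $\sup_{u\in\Upsilon}\twonorm{\Psi u}$ around $\sqrt{n}$.

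Next I would handle the expectations via Gordon's Gaussian comparison inequality applied to the pair of Gaussian processes arising from the bilinear form $(u,w)\mapsto\ip{\Psi u,w}$ over $\Upsilon\times S^{n-1}$. The standard conclusion is
\begin{equation*}
\E\!\left[\inf_{u\in\Upsilon}\twonorm{\Psi u}\right]\;\ge\;\E\twonorm{g_n}-\ell_*(\Upsilon),\qquad
\E\!\left[\sup_{u\in\Upsilon}\twonorm{\Psi u}\right]\;\le\;\E\twonorm{g_n}+\ell_*(\Upsilon),
\end{equation*}
where $g_n\sim N(0,I_n)$, so $\E\twonorm{g_n}=\sqrt{n}\bigl(1-o(1)\bigr)$; this asymmetry is exactly what produces the $o(1)$ on the lower side of~\eqref{eq::gauss-bounds} but not on the upper side. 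Then, since $\Psi\mapsto\twonorm{\Psi u}$ is $1$-Lipschitz in Frobenius norm whenever $\twonorm{u}=1$, both $\inf_{u\in\Upsilon}\twonorm{\Psi u}$ and $\sup_{u\in\Upsilon}\twonorm{\Psi u}$ are $1$-Lipschitz functions of the Gaussian matrix $\Psi$, and the Borell--Tsirelson--Ibragimov--Sudakov Gaussian concentration inequality gives, for each $t>0$, deviation probabilities of at most $e^{-t^2/2}$. Choosing $t=\sqrt{2d\log p}$ gives $2/p^d$ per side, hence $4/p^d$ in total by the union bound.

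To conclude, I would plug in Lemma~\ref{lemma:U-number} to replace $\ell_*(\Upsilon)$ by $\bar{C}\sqrt{s\log(5ep/s)}$. The sample-size hypothesis~\eqref{eq::sample-size} is precisely $\sqrt{n}\,\theta\ge\bar{C}\sqrt{s\log(5ep/s)}+\sqrt{2d\log p}$, so combining the expectation and concentration estimates yields, with probability at least $1-4/p^d$,
\begin{equation*}
\sqrt{n}\bigl(1-\theta-o(1)\bigr)\;\le\;\inf_{u\in\Upsilon}\twonorm{\Psi u}\quad\text{and}\quad
\sup_{u\in\Upsilon}\twonorm{\Psi u}\;\le\;\sqrt{n}(1+\theta).
\end{equation*}
Dividing by $\sqrt{n}$ and undoing the normalization $u=\Sigma^{1/2}\delta/\twonorm{\Sigma^{1/2}\delta}$ gives~\eqref{eq::gauss-bounds} for every admissible $\delta\ne 0$.

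The main technical point is the Gordon step: one is applying the inequality not to an inner product against a fixed vector but to the nonlinear functional $\twonorm{\Psi u}=\sup_{w\in S^{n-1}}\ip{\Psi u,w}$, so one must introduce the $(u,w)$ process on $\Upsilon\times S^{n-1}$ and compare it to an auxiliary process of the form $\ip{g,u}+\ip{h,w}$. Once this comparison is set up, everything else is routine concentration; in particular, the fact that $\Upsilon$ is neither convex nor smooth is harmless, since Gordon's inequality and Gaussian concentration both apply to arbitrary subsets of the sphere. The remaining bookkeeping---verifying the normalization condition on $\Sigma$ and that $\bar{C}$ is the correct constant---is immediate from Proposition~\ref{prop:magic-cone} and the definition~\eqref{eq::C-define}.
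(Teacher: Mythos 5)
Your proposal is correct and follows essentially the same route as the paper: Gordon's comparison inequality to bound the expectations of $\inf$ and $\sup$ of $\twonorm{\Psi u}$ over $\Upsilon$ in terms of $\ell_*(\Upsilon)$, the bound $\ell_*(\Upsilon)\le\bar{C}\sqrt{s\log(5ep/s)}$ from Lemma~\ref{lemma:U-number}, and Gaussian concentration for the two $1$-Lipschitz functionals with $t=\sqrt{2d\log p}$. The only difference is presentational: you spell out the min--max comparison over $\Upsilon\times S^{n-1}$ that the paper delegates to Raskutti--Wainwright--Yu.
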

\begin{proof}
We only provide a sketch here; see~\cite{RWY09} for details.
Using the Slepian's Lemma and its extension by~\cite{Gor85},
the following inequalities have been derived by~\cite{RWY09} 
(cf. Proof of Proposition 1 therein), 
\begin{eqnarray*}
\expct{\inf_{\delta \in E_s} \twonorm{X \delta}}
& \geq &
\expct{\twonorm{g}}
-  \expct{\sup_{\delta \in E_s} \abs{\ip{h,  \Sigma^{1/2} \delta}}}, \\
\expct{\sup_{\delta \in E_s} \twonorm{X \delta}}
& \leq &
\sqrt{n} + \expct{\sup_{\delta \in E_s} \abs{\ip{h,  \Sigma^{1/2} \delta}}},
\end{eqnarray*}
where $g$ and $h$ are random vectors with i.i.d  Gaussian $N(0, 1)$ 
elements in $\R^n$ and $\R^p$ respectively.
Now Lemma~\ref{lemma:lower-bound-exp-imp} follows immediately,
after we plug in the bound as in Lemma~\ref{lemma:U-number} on
$$\ell_*(\Upsilon) 
:= \expct{\sup_{\delta \in E_s} \abs{\ip{h,  \Sigma^{1/2} \delta}}}.$$ 
\begin{lemma}
\label{lemma:lower-bound-exp-imp}
Suppose $\Sigma$ satisfies Assumption~\ref{def:memory}. Then
for  $\bar{C}$ as in Theorem~\ref{thm:main}, we have
\ben
\expct{\inf_{\delta \in E_s} \twonorm{X \delta}}
& \geq & \sqrt{n} - o(\sqrt{n}) - \bar{C} \sqrt{s \log (5ep/s)} \\
\expct{\sup_{\delta \in E_s} \twonorm{X \delta}}
& \leq & \sqrt{n} +  \bar{C} \sqrt{s \log (5ep/s)}.
\een
\end{lemma}
We then apply the concentration of measure inequality for 
$\inf_{\delta \in E_s} \twonorm{X \delta}$, for which it is well
known that the $1$-Lipschitz condition holds for 
$\inf_{\delta \in E_s} \twonorm{X \delta} = 
\inf_{\delta \in E_s} \twonorm{A \Sigma^{1/2} \delta},$
where $A$ is a matrix with i.i.d. standard normal 
random variables in $\R^{n \times p}$.
Recall a function $f: X \rightarrow Y$ is called
$1$-Lipschitz condition if for all $x, y \in X$,
$$d_Y(f(x), f(y)) \leq d_X(x, y).$$
\begin{proposition}
\label{prop:lipt}
View Gaussian random matrix $A$ as a canonical Gaussian vector in $\R^{np}$.
Let $f(A) := \inf_{\delta \in E_s} \twonorm{A \Sigma^{1/2} \delta}$
and $f'(A) := \sup_{\delta \in E_s} \twonorm{A \Sigma^{1/2} \delta}$
be two functions of $A$ from $\R^{np}$ to $R$.
Then $f, f': \R^{np} \rightarrow R$ are $1$-Lipschitz:
\bens
|f(A) - f(B)| & \leq & \twonorm{A - B} \; \leq \; \fnorm{A - B},\\
|f'(A) - f'(B)| & \leq & \twonorm{A - B} \; \leq \; \fnorm{A - B}.
\eens
\end{proposition}
Finally we apply the concentration of measure in Gauss Space to obtain
for $t > 0$,
\begin{eqnarray}
\prob{|f(A) - \expct{f(A)}| > t} & \leq & 2 \exp(-t^2/2), \; \text{ and }  \\
\prob{|f'(A) - \expct{f'(A)}| > t} & \leq & 2 \exp(-t^2/2).
\end{eqnarray}
Now it is clear that with probability at least $1 - 4 /p^d$,  where $d > 0$,
we have for $X = A \Sigma^{1/2}$
\begin{eqnarray*}
\inf_{\delta \in E_s} \twonorm{A \Sigma^{1/2} \delta} & =: & f(A) \;  \geq \;
\expct{\inf_{\delta \in E_s} \twonorm{A \Sigma^{1/2} \delta}} - 
\sqrt{2 d \log p} \\
& \geq & 
\sqrt{n} -o(\sqrt{n}) - \bar{C} \sqrt{s \log (5ep/s)}
- \sqrt{2 d \log p},
\end{eqnarray*}
which we denote as event $\F$, and 
\begin{eqnarray*}
\sup_{\delta \in E_s} \twonorm{A \Sigma^{1/2} \delta} & =: & f'(A) \; \leq \; 
\expct{\sup_{\delta \in E_s} \twonorm{A \Sigma^{1/2} \delta}} +
\sqrt{2 d \log p} \\
& \leq & 
\sqrt{n} + \bar{C} \sqrt{s \log (5ep/s)} + \sqrt{2 d \log p},
\end{eqnarray*}
which we denote as $\F'$. 
Now it is clear that~\eqref{eq::gauss-bounds} holds on $\F \cap \F'$,
given~\eqref{eq::sample-size}.
\end{proof}

\bibliography{./subgaussian}

\end{document}